\newtheorem{theorem}{Theorem}[section]
\newtheorem*{remark}{Remark}
\begin{document}

\title{Convex inner approximation of the feeder hosting capacity limits on dispatchable demand}

\author{Nawaf Nazir, \textit{Student Memeber, IEEE} and Mads Almassalkhi, \textit{Member, IEEE} 
\thanks{N. Nazir and M. Almassalkhi are with the Department of Electrical and Biomedical Engineering, University of Vermont, Burlington, Vermont, USA {\tt\small \{mnazir,malmassa\}@uvm.edu} Support from U.S. Department of Energy award number DE-EE0008006 is gratefully acknowledged.}}

 \maketitle

\begin{abstract}
This paper presents a method to obtain a convex inner approximation that aims to improve the feasibility of optimal power flow (OPF) models in distribution feeders. For a resistive distribution network, both real and reactive power effect the node voltages and this makes it necessary to consider both when formulating the OPF problem. Inaccuracy in linearized OPF models may lead to under and over voltages when dispatching flexible demand, at scale, in response to whole-sale market or grid conditions. In order to guarantee feasibility, this paper obtains an inner convex set in which the dispatchable resources can operate, based on their real and reactive power capabilities, that guarantees network voltages to be feasible. Test simulations are conducted on a standard IEEE distribution test network to validate the approach.
\end{abstract}

\IEEEpeerreviewmaketitle

\section{Introduction}\label{sec:introduction}
With the increasing penetration of renewable generation and demand side flexibility in distribution networks, network constraints such as voltage limits could be violated. Traditional optimization techniques for dispatching resources include linear OPF based on \textit{LinDist} models~\cite{baran1989optimal}. However, these linear models only work well close to the operating point and as the system is stressed to its extremes due to increasing penetration of DERs, they break down. Recently, improved linear approximate models have been developed that provide better accuracy over wide range of operating points~\cite{yang2016optimal,bolognani2016fast,bolognani2016existence}. However, the solution space of the power flow equations is highly non-convex~\cite{hiskens2001exploring}, which means that such methods cannot guarantee the feasibility of its solutions under all conditions. A comprehensive review of the many linear load flow formulations can be found in~\cite{stott2009dc}.

Apart from linear OPF methods, convex relaxation based techniques have gained popularity in recent years. Convex relaxation based methods such second order cone programs (SOCP) and semi-definite programs (SDP) provide lower bounds on the optimal solution~\cite{lavaei2012zero,farivar2011inverter}. However, these convex relaxations are not always exact~\cite{lesieutre2011examining}. The conditions under which the convex relaxations are exact have been studied extensively in~\cite{lavaei2012zero,lavaei2014geometry,gan2015exact} and often these conditions break down under extreme penetration of renewables and reverse power flow~\cite{huang2017sufficient}. Furthermore, the exactness of the relaxations is dependent on the chosen objective function~\cite{li2016convex}.

In order to improve the feasibility of OPF solutions in distribution systems with extreme penetration of renewables, this papers aims to develop a method that guarantees the feasibility of optimized solutions. In order to achieve this, a convex inner approximation is developed to determine the feasible operating region for the dispatchable resources in the distribution network. Previous works in literature such as~\cite{dvijotham2016error,heidari2017non} have developed techniques to determine error bounds in linear power flow approximations. This paper builds upon these works but develops a convex inner approximation for determining the operating region of dispachable resources that guarantees feasibility of solution.

As the proportion of dispatchable demand-side resources increases in the distribution network, they are expected to provide flexibility to the grid in the form of valuable energy services~\cite{kristov2016tale}. These flexible resources could be a fleet of DERs that constitute a virtual battery (VB), solar PV arrays, or advanced distribution feeders schemes act as a VB resource. In either case, these resources in aggregate are expected to provide certain energy services and participate in ISO markets, such as real-time or ancillary market services. However, the resulting ISO market-based dispatch signal does not consider the underlying distribution network and nodal constraints. Disaggregating the market-based dispatch signal at a nodal resource level, in real time, to account for local constraints and grid conditions represents a  challenging problem. The key contribution of this paper is a convex formulation that provides an aggregator with the ability to disaggregate a fleet-wide dispatch signal into a feasible nodal dispatch across a distribution network as depicted in Fig.~\ref{fig:cyber_physical_model}. Specifically, this paper develops a technique to determine a feasible operating region of these dispatchable resources which does not violate local network constraints. This is achieved by developing a provable convex inner approximation of the feasible region. Simulation tests are conducted on IEEE-13 node system~\cite{kersting2001radial} to show the effectiveness and validity of the approach.
\begin{figure}[t]
\centering
\includegraphics[width=0.4\textwidth]{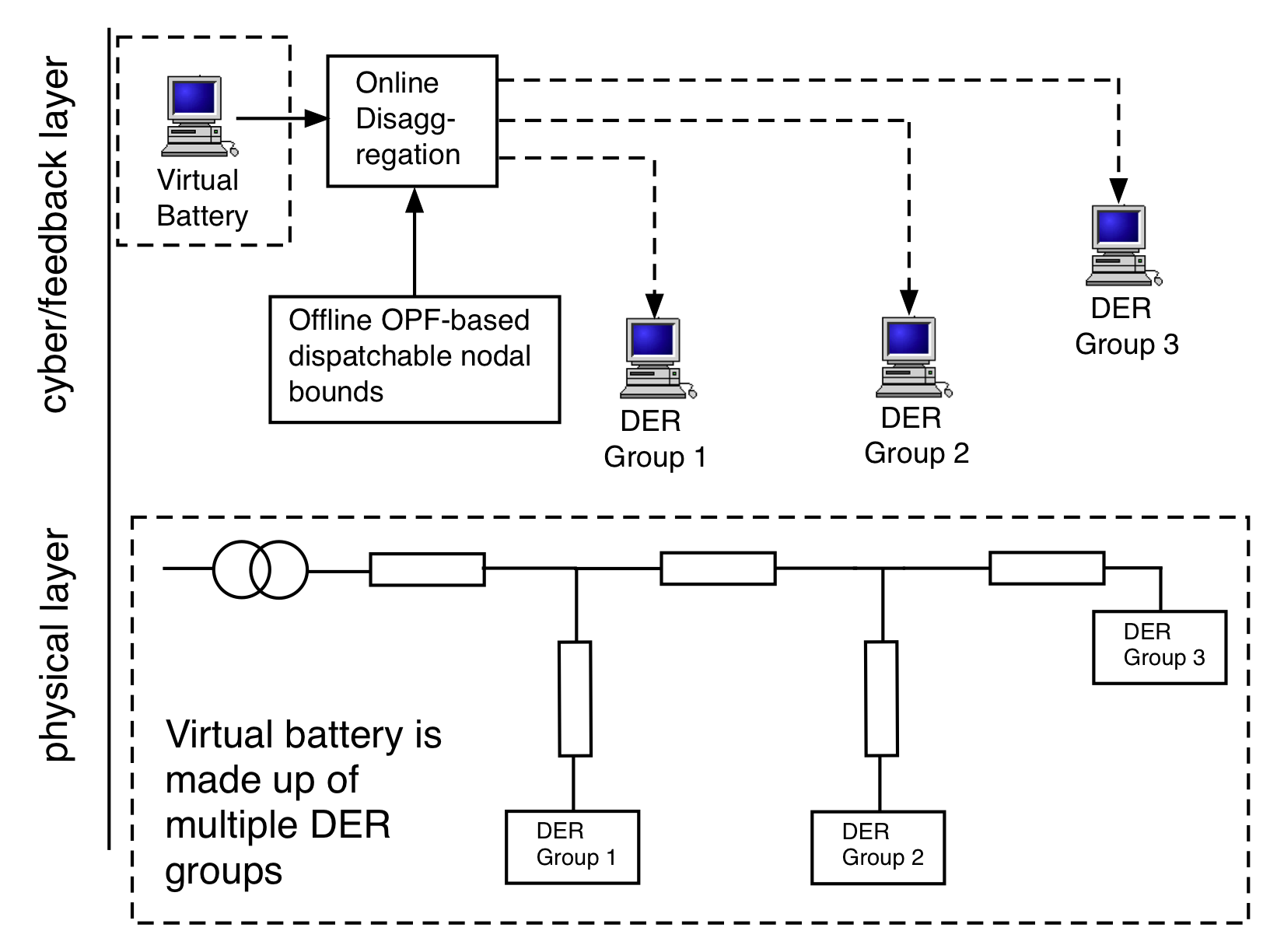}
\caption{\label{fig:cyber_physical_model}A schematic representation of the network model. The physical layer represents the circuit that connects the different DER groups into an aggregate virtual battery, whereas the cyber layer represents the disaggregation of the virtual battery market signal to the DER groups based on the feasible nodal bounds that are determined offline.}
\end{figure}
The main contributions of the paper are follows:
\begin{enumerate}
    \item Through a motivating example, this paper explains the shortcomings of linear OPF approximations and how they can violate network constraints.
    \item The problem of determining the feasible operating region of dispactchable resources is re-formulated as an inner convex optimization that respects network constraints.
\end{enumerate}
The rest of the paper is organized as follows: Section~\ref{sec:DC_OPF} illustrates the shortcomings of linear OPF under high renewable penetration in distribution networks. Section \ref{sec:Opt_form} develops the mathematical model for the optimization problem of determining the operating region of dispatchable resources, whereas section \ref{sec:convex_inner} provides the convex inner approximation of the feasible space. Simulation results showing the validity of the approach are given in section \ref{sec:sim_results} and finally conclusions and future scope of work are provided in section \ref{sec:conclusion}.

\section{Shortcomings of linear OPF}\label{sec:DC_OPF}
This section presents the shortcomings of linear OPF approximations under certain conditions in distribution networks. Simulations are run on the modified IEEE-13 node test case to check the effect of real and reactive power variation on nodal voltages. The IEEE-13 node test case with a DER at node $6$ capable of four-quadrant operation is shown in Fig.~\ref{fig:13nodes}. For the purpose of this study, the switching devices in the network (switches, capacitor banks, transformers) are assumed to be fixed at their nominal values. The real and reactive power injections of the DER are varied independently to observe the effect on node voltages. The results are shown in Fig.~\ref{fig:phaseA_Q} and Fig.~\ref{fig:phaseA_P}. From the figures it can be observed that changes in real and reactive power injections at one node have significant effect on voltages at other nodes, especially the nodes which are "down-hill" from the injection node. Unlike transmission systems, where the coupling between real power and voltage is minimal, in distribution systems, changes in real power injection can cause significant( if not as much as reactive power) change in node voltages. From these results it becomes clear that the effect of both real and reactive power needs to be considered in order to correctly control voltages in distribution systems. However, in most linear power flow approximations, the affect of real power variations on the nodal voltages is often neglected, which could result in violation of voltage constraints. 

\begin{figure}[ht]
\centering
\includegraphics[width=0.37\textwidth]{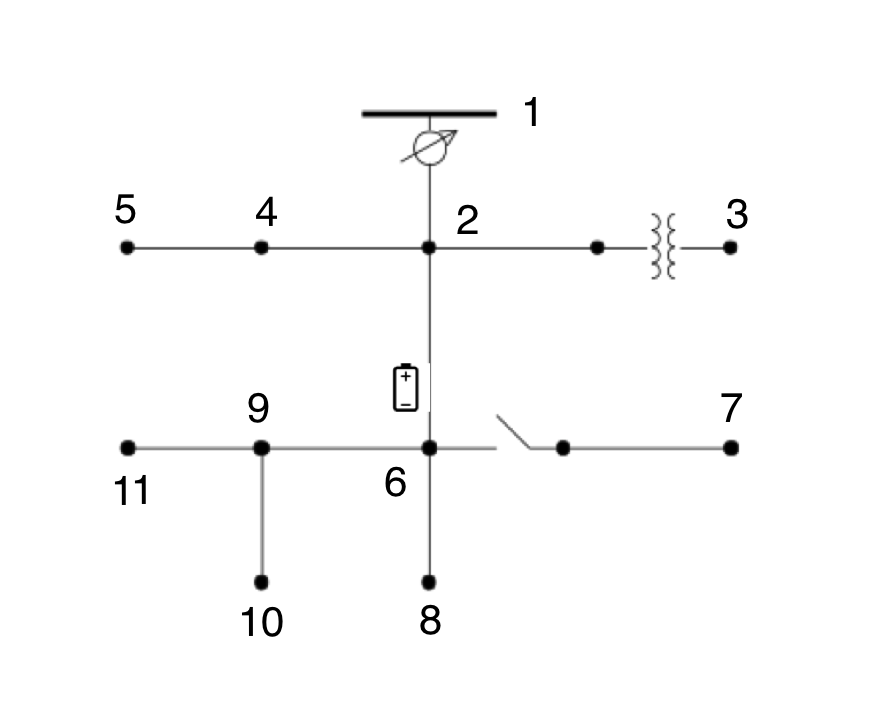}
\caption{\label{fig:13nodes}Modified IEEE-13 node test system with DER capable of four-quadrant operation at node $6$. The switch between nodes $6$ and $7$ is assumed to be closed and the transformer between nodes $2$ and $3$ is assumed to be ideal with unity turns ratio}
\end{figure}
 
\begin{figure}
    \centering
    \subfloat[\label{fig:phaseA_Q}]{
    \includegraphics[width=0.52\linewidth]{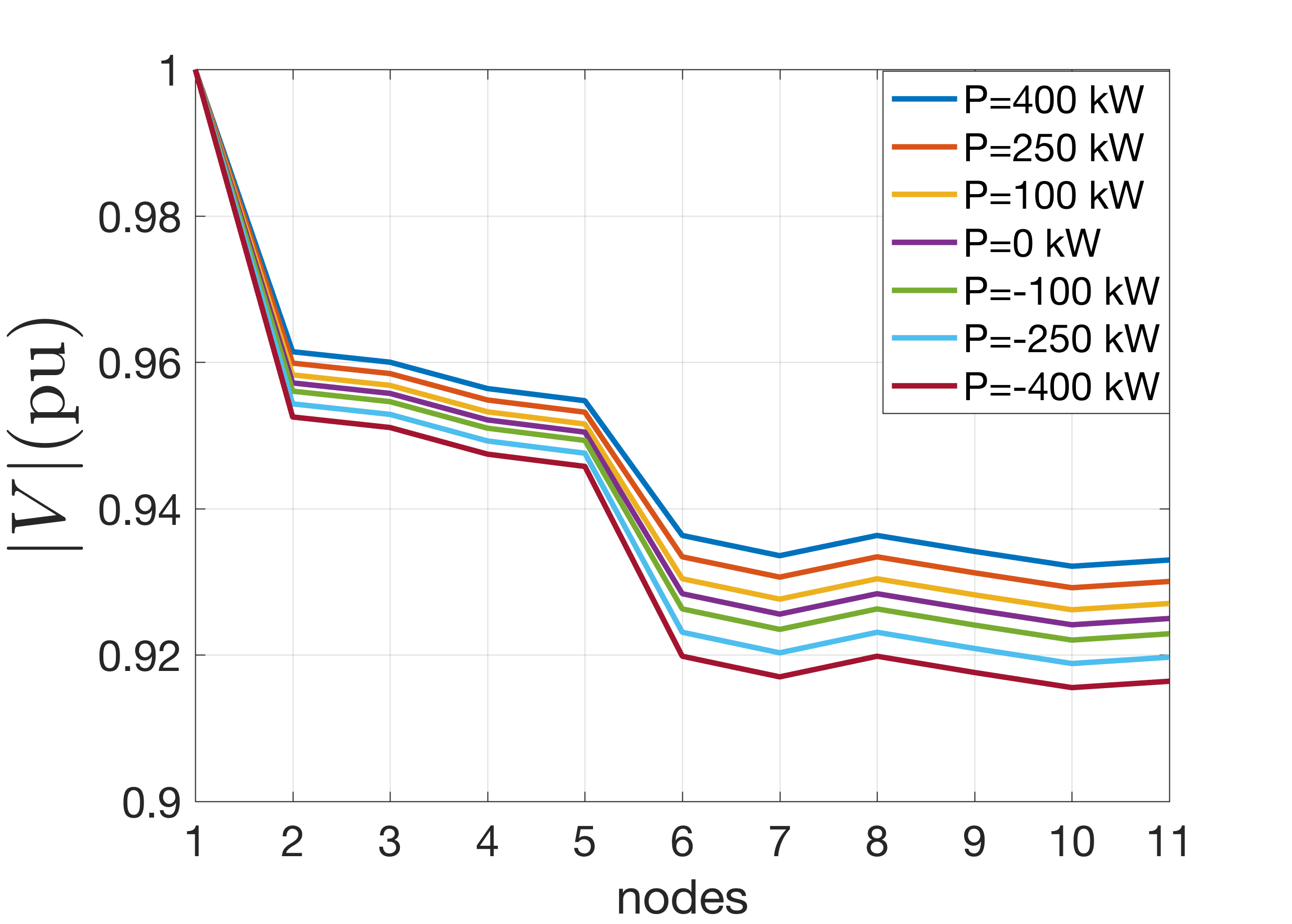}}
    \subfloat[\label{fig:phaseA_P}]{
    \includegraphics[width=0.52\linewidth]{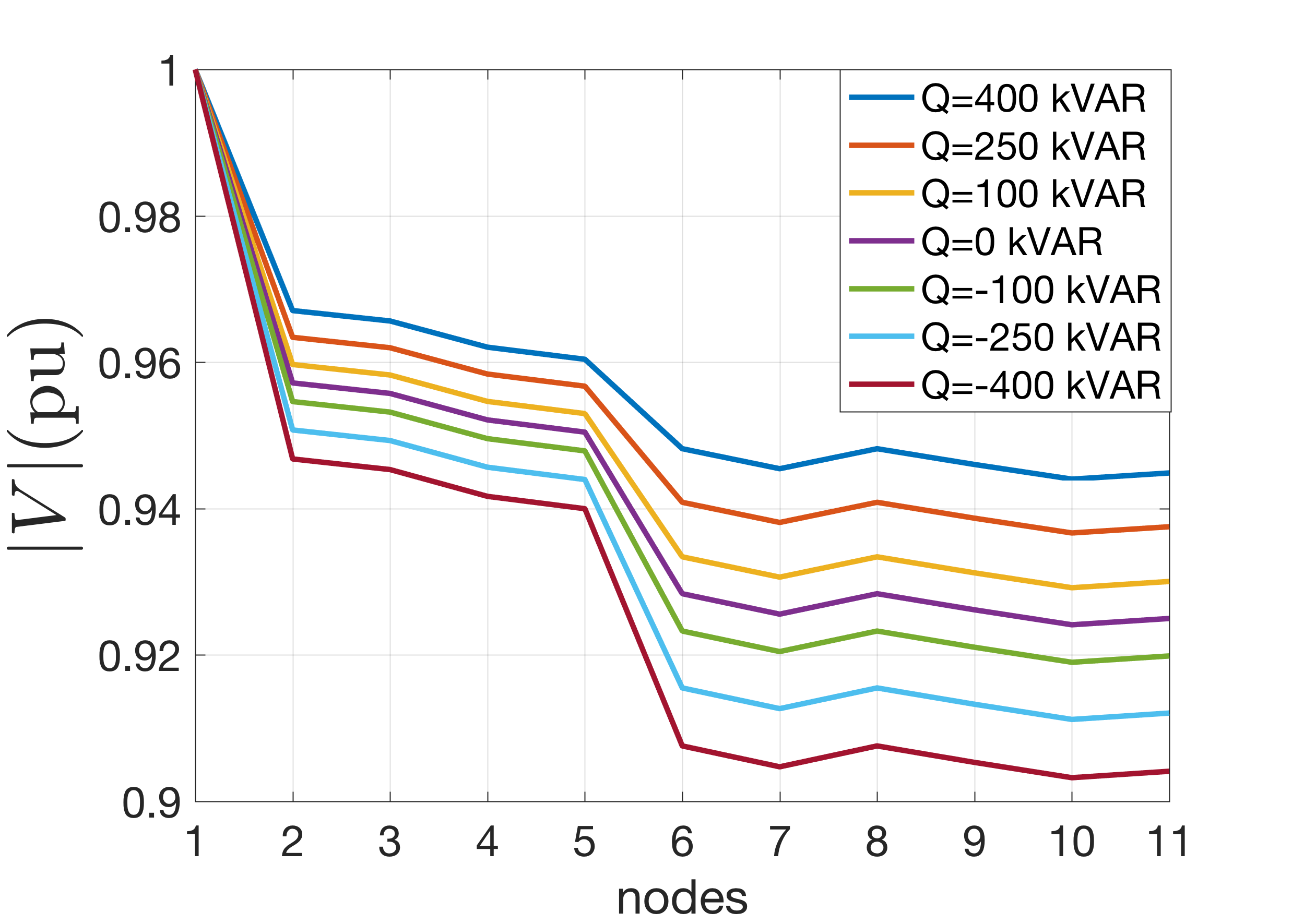}}
\caption{(a) Variation of node voltage with change in real power injection at node $6$, (b) Variation of node voltage with change in reactive power injection at node $6$. }
\end{figure}

\begin{figure}[ht]
\centering
\includegraphics[width=0.37\textwidth]{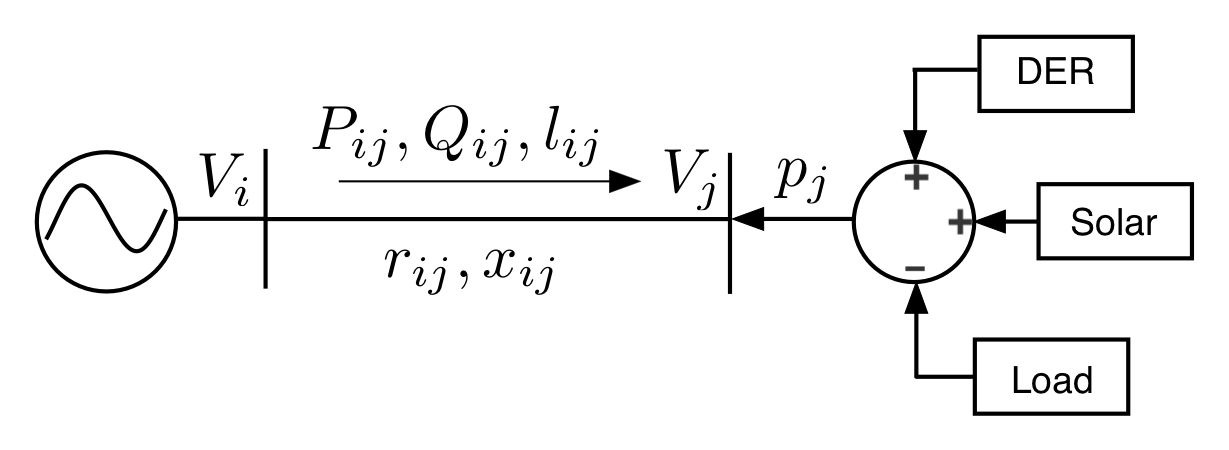}
\caption{\label{fig:two_node}Two node model with demand side power source.}
\end{figure}


\begin{figure}
    \centering
    \subfloat[\label{fig:QvsV}]{
    \includegraphics[width=0.52\linewidth]{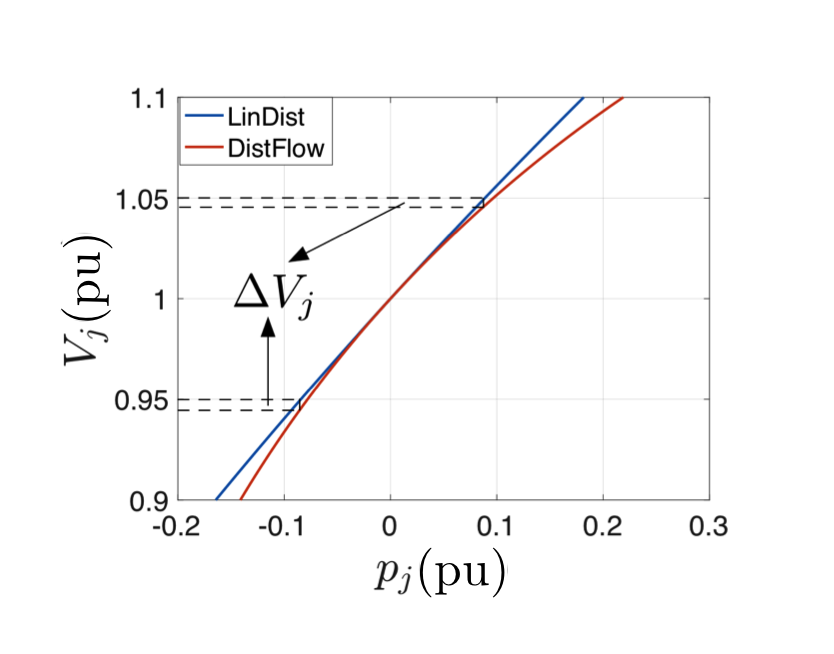}}
    \subfloat[\label{fig:feas_region}]{
    \includegraphics[width=0.52\linewidth]{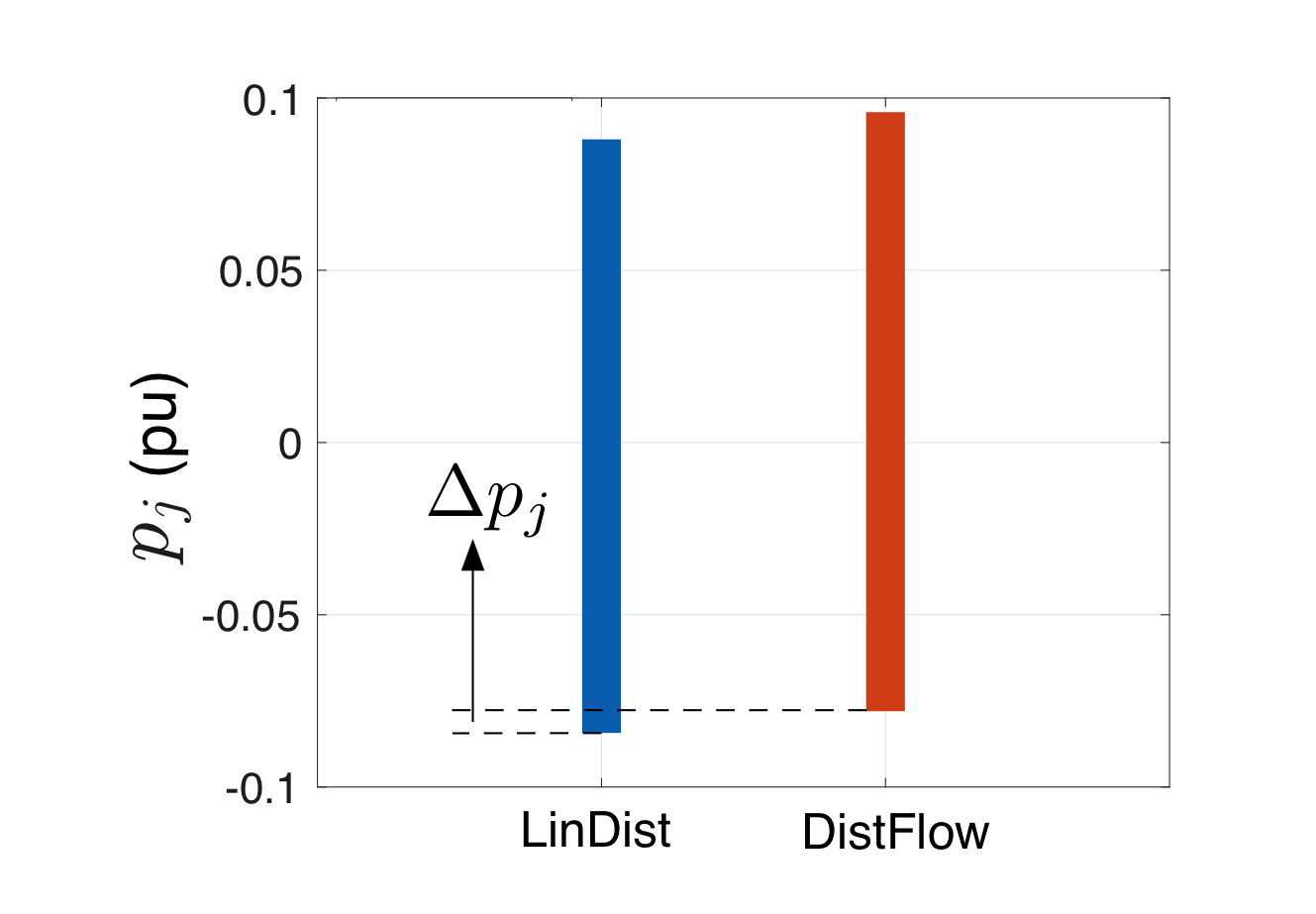}}
\caption{(a) Comparison in change in node voltage with change in real power set-points between \textit{DistFlow} and \textit{LinDist} for two node system. The two models result in different voltages and if design is based on linear model then network voltages limit will be violated if $p_j$ is operated at its lower limit, (b) Comparison of feasible region obtained from \textit{LinDist} and \textit{DistFlow} models for two node model. }
\end{figure}


To further illustrate this point, a two node model with DERs, solar and demand, as shown in Fig.~\ref{fig:two_node} and parameters given in Table~\ref{table_twonode} is considered. For this system, the net power injection at node $j$, $p_j$, which represents the net effect of DERs, solar and demand, is varied over a range to find the corresponding voltages obtained from \textit{LinDist} and \textit{DistFlow}. From Fig.~\ref{fig:QvsV} it can be seen that the linear model and nonlinear \textit{DistFlow} model do not match and this could lead to operating the system at set-points that violate the voltage constraints as can be seen from Fig.~\ref{fig:QvsV}, where the voltage violation occurs when the system is operated at the lower voltage limit. This implies that when formulating an OPF model, it is important to consider the effect of the non-linear terms in the power flow equations in order to ensure feasibility. 

The proposed approach in this paper, takes a worst case of the non-linear terms in the power flow equations and develops a feasible model for determining the real power limit bounds on the net power injections, which in turn can be used to determine the bounds on the flexible resources assuming demand and solar forecast are known. This means that the technique guarantees node voltages to be within their limits for the determined power bounds, while at the same time keeping the convex form of the formulation.
\begin{table}[h!]
\centering
\caption{\label{table_twonode}Parameters of the 2-node, 1-line system}
{
\begin{tabular}{rcl} 
\toprule
{Symbol} & {Type} & {Bounds/Values} \\
\midrule
            $l_{ij}$ & Variable & [0,0.5] p.u\\
            $V_j$ & Variable & [0.95,1.05] p.u\\
            $P_{ij}$ & Variable & [-5,5] MW\\
            $Q_{ij}$ & Variable & [-5,5] MVar\\
            $p_j$ & Variable & [-1,1] MW\\
            $V_{i}$ & Data & 1 p.u\\
            $r_{ij}+jx_{ij}$ & Data & 10+j15 ohm\\
            $V_{\text{base}}$ & Data & 4.16 kV\\
            $S_{\text{base}}$ & Data & 1 MVA
            \\
            \bottomrule
\end{tabular}
}
\end{table}

\section{Formulation of the optimization problem}\label{sec:Opt_form}
The aim of this paper is to formulate an optimization scheme to provide feasible operating limits to the dispatchable resources based on the real and reactive power capabilities of the network, in order to satisfy the nodal voltage constraints. The input to this allocation problem is the real and reactive power capabilities of each node and the output is the real power operating region where each node can operate while satisfying the network constraints. In order to increase the operating region of the resources, the optimization scheme is posed as a power bound maximization problem. \textit{DistFlow} equations as given in~\cite{baran1989optimal} are used to solve the optimization problem. 
However, \textit{DistFlow} equations are non-linear which takes the problem outside the realm of convex optimization. Linearized \textit{LinDist} models are often used, but they are accurate only close to the operating point voltages. As the system starts operating away from the nominal voltage, the errors could become large~\cite{alyami2014adaptive}. In this work we develop a feasible convex formulation by modifying the \textit{LinDist} equations. However, the techniques presented here can easily be extended to other single phase and multi-phase linearized power flow models in literature such as~\cite{yang2018linearized,bernstein2017linear}. 
\subsection{Mathematical model}\label{sec:math_model}
Consider a radial distribution network as a graph $\mathcal{G}=\{\mathcal{N}\cup\{0\},\mathcal{L}\}$ consisting of $\mathcal{N}:=\{1,\hdots,|\mathcal{N}|\}$ nodes and a set of $\mathcal{L}:=\{1,\hdots,|\mathcal{L}|\}$ branches such as the one shown in Fig.~\ref{fig:radial_network}. Node $0$ is assumed to be the substation node with a fixed voltage $V_0$. Let $B\in \mathbb{R}^{(n+1)\times n}$ be the \textit{incidence matrix} of the undirected graph $\mathcal{G}$ relating the branches in $\mathcal{L}$ to the nodes in $\mathcal{N}\cup \{0\}$, such that the entry at $(i,j)$ of $B$ is $1$ if the $i$-th node is connected to the $j$-th branch and otherwise $0$. If $V_i$ and $V_j$ are the voltage phasors at nodes $i$ and $j$ and $I_{ij}$ is the current phasor in branch $(i,j)\in \mathcal{L}$, then $v_i=|V_i|^2$, $v_j=|V_j|^2$ and $l_{ij}=|I_{ij}|^2$. $P_{i}$ be the real power flow, $Q_{i}$ be the reactive power flow, $p_i$ be the real power injection and $q_i$ be the reactive power injection, at bus $i$, $r_{ij}$ and $x_{ij}$ be the resistance and reactance of the branch $(i,j)\in \mathcal{L}$ and $z_{ij}=r_{ij}+jx_{ij}$ be the impedance. Then for the radial distribution network, the relation between node voltages and power flows is given by the following \textit{DistFlow} equations:
\begin{align}
v_j=&v_i+2r_{ij}P_{j}+2x_{ij}Q_{j}-|z_{ij}|^2l_{ij} \quad \forall (i,j)\in \mathcal{L}\label{eq:volt_rel}\\
P_{i}=&P_j+p_i-r_{ij}l_{ij} \quad \forall (i,j)\in \mathcal{L}\label{eq:real_power_rel}\\
Q_i=&Q_j+q_i-x_{ij}l_{ij} \quad \forall (i,j)\in \mathcal{L}\label{eq:reac_power_rel}\\
l_{ij}=&\frac{P_j^2+Q_j^2}{v_j} \quad \forall (i,j)\in \mathcal{L}\label{eq:curr_rel}
\end{align}

\begin{figure}[ht]
\centering
\includegraphics[width=0.43\textwidth]{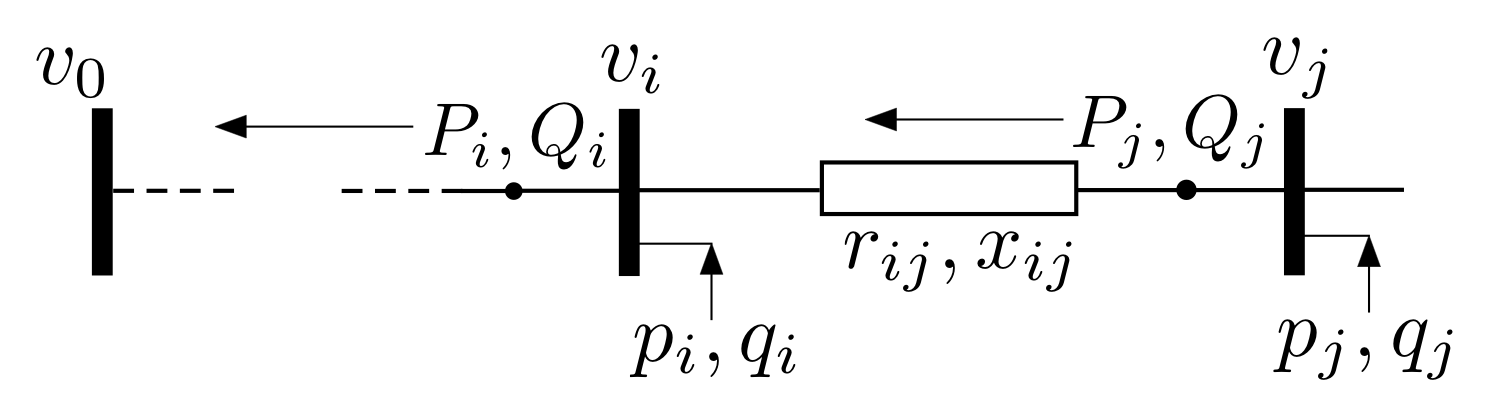}
\caption{\label{fig:radial_network} Diagram of a radial distribution network~\cite{heidari2017non}.}
\end{figure}

The above set of equations are nonlinear due to \eqref{eq:curr_rel}. By leveraging the \textit{incidence} matrix $B$ of the distribution network and following the method adopted in~\cite{heidari2017non}, \eqref{eq:real_power_rel} and \eqref{eq:reac_power_rel} can be expressed through the following matrix equations:
\begin{align}
    P=&p+AP-ARl\label{eq:P_matrix}\\
    Q=&q+AQ-AXl\label{eq:Q_matrix}
\end{align}
where $P=[P_i]_{i\in \mathcal{N}}$, $Q=[Q_i]_{i\in \mathcal{N}}$, $p=[p_i]_{i\in \mathcal{N}}$, $q=[q_i]_{i \in \mathcal{N}}$, $R=\text{diag}\{r_{ij}\}_{(i,j)\in \mathcal{L}}$, $X=\text{diag}\{x_{ij}\}_{(i,j)\in \mathcal{L}}$, $l=[l_{ij}]_{(i,j)\in \mathcal{L}}$ and $A=[0_n \quad I_n]B-I_n$, where $I_n$ is the $n\times n$ identity matrix and $0_n$ is a column vector of $n$ rows.
Simplifying \eqref{eq:P_matrix} and \eqref{eq:Q_matrix} leads to the following expression for $P$ and $Q$
\begin{align}
    P=&Cp-D_{\text{R}}l\label{eq:P_relation}\\
    Q=&Cq-D_{\text{X}}l\label{eq:Q_relation}
\end{align}
where $C=(I_n-A)^{-1}$, $D_{\text{R}}=(I_n-A)^{-1}AR$, and $D_{\text{X}}=(I_n-A)^{-1}AX$. 

\begin{remark}
The matrix $(I_n-A)$ is nonsingular since $I_n-A=2I_n-[0_n \quad I_n]B=2I_n-B_n$, where $B_n:=[0_n \quad I_n]B$ is the $n\times n$ matrix obtained by removing the first row of $B$. For a radial network, the vertices and edges can always be ordered in such a way that $B$ and $B_n$ are upper triangular with $\text{diag}(B_n) = 1_n$, which implies that .  $2I_n-B_n$ is upper triangular and $\text{diag}(2I_n-B_n)=1_n$. Thus,  the $\det(2I_n-B^{'})=1>0$ and $I_n-A$ is then non-singular.
\end{remark}

Similarly, \eqref{eq:volt_rel} can be applied recursively to the distribution network in Fig.~\ref{fig:radial_network} to get the following matrix equation:
\begin{align}\label{eq:volt_matrix_rel}
    [v_j-v_i]_{(i,j)\in \mathcal{L}}=2(RP+XQ)-Z^2l
\end{align}
where $Z^2=\text{diag}\{z_{ij}^2\}_{(i,j)\in \mathcal{L}}$. Based on the \textit{incidence matrix} $B$, the left hand side of \eqref{eq:volt_matrix_rel} can be formulated in terms of the fixed head node voltage as:
\begin{align}\label{eq:volt_transform}
    C^T[v_j-v_i]_{(i,j)\in \mathcal{L}}=V-v_{\text{0}} \mathbf{1}_n
\end{align}
where $V=[v_i]_{i\in \mathcal{N}}$. Based on \eqref{eq:volt_transform}, \eqref{eq:volt_matrix_rel} can be expressed as:
\begin{align}\label{eq:volt_matrix_2}
    V=v_{\text{0}} \mathbf{1}_n+2(C^TRP+C^TXQ)-C^TZ^2l
\end{align}
Substituting \eqref{eq:P_relation} and \eqref{eq:Q_relation} into \eqref{eq:volt_matrix_2}, we obtain a compact relation between voltage and power injections shown below.
\begin{align}\label{eq:final_volt_rel}
    V=v_{\text{0}}\mathbf{1}_n+M_{\text{p}}p+M_{\text{q}}q-Hl
\end{align}
where $M_{\text{p}}=2C^TRC$, \quad $M_{\text{q}}=2C^TXC$ and \newline $H=C^T(2(RD_{\text{R}}+XD_{\text{X}})+Z^2)$

Apart from the nonlinear relation \eqref{eq:curr_rel} of $l$ to $P,Q$ and $V$, \eqref{eq:final_volt_rel} is a linear relationship between the nodal power injections $p,q$ and node voltages $V$. The nonlinearity in the network is represented by~\eqref{eq:curr_rel}, as the current term $l$ is related to the power injections and node voltages in a nonlinear fashion. Including this term into the optimization model would render the optimization problem NP hard, however, neglecting this term could result in infeasible solutions from the linearized OPF model. 


\subsection{Optimization problem formulation}
The problem being addressed in this paper is to determine the convex feasible operating region of dispatchable resources that respects the network voltage constraints. 
Let $\Delta p:=p^+ - p^-$ be the feasible operating region of the net power injections. If the feasible region of the net power injections is found, then the feasible operating region of the flexible resources can be easily determined assuming the demand and solar power forecast are known. Based on these assumptions, \eqref{eq:final_volt_rel} can be applied at $p^+$ and $p^-$ as:
\begin{align}
    V^+=&v_{\text{0}} \mathbf{1}+M_{\text{p}}p^++M_{\text{q}}q^+-Hl^+\label{eq:volt_rel_pos}\\
    V^-=&v_{\text{0}} \mathbf{1}+M_{\text{p}}p^-+M_{\text{q}}q^--Hl^-\label{eq:volt_rel_neg}
\end{align}

where $V^+:=V(p^+)$, $q^+:=q(p^+)$, and $l^+:=l(p^+)$ are the respective variable values at $p^+$, and, $V^-:=V(p^-), q^-:=q(p^-), l^-:=l(p^-)$ are the values of the variables at $p^-$.
Next, we modify \eqref{eq:volt_rel_pos} and \eqref{eq:volt_rel_neg} to obtain the feasible operating region as:
\begin{align}
     M_{\text{p}}p^+=&V^+-v_0\mathbf{1}-M_{\text{q}}q^+ +Hl^+\label{eq:op_reg_rel_pos}\\
     M_{\text{p}}p^-=&V^--v_0\mathbf{1}-M_{\text{q}}q^- +Hl^-\label{eq:op_reg_rel_neg}
\end{align}


Based on \eqref{eq:op_reg_rel_pos} and \eqref{eq:op_reg_rel_neg}, the optimization problem to determine the maximum feasible operating region of the network can be obtained from the solution of the two optimization problems shown below for $p^+$ and $p^-$:
\begin{subequations}\label{eq:P1}
\begin{align}\label{eq:P1_a}
\text{(P1)} \quad &  \max_{V^+, p^+, q^+, l^+} \ \sum_{i=1}^{n}\log( p_i^+) & \\
\text{subject to}: \quad & \eqref{eq:op_reg_rel_pos}, \eqref{eq:curr_rel} \label{eq:P1_b}\\
  & \underline{S_i} \le f(p_i^+,q_i^+) \le \overline{S_i}  \forall i\in \mathcal{N}\label{eq:P1_c}\\
   & \underline{V}\leq  V^+ \leq \overline{V}\label{eq:P1_d}\\
    &  \underline{l}\leq  l^+ \leq \overline{l}\label{eq:P1_e}
\end{align}
\end{subequations}
where $f(p_i^+, q_i^+)$ represents the type of apparent power constraint on the nodal injections that is required to satisfy the bounds $\underline{S_i}\in \mathcal{R}$ and $\overline{S_i} \in \mathbb{R}$ at node $i$. $f(p_i^+,q_i^+)$ could represent box constraints  on active and reactive power or it could represent a quadratic apparent power constraint in the case of an inverter. $\underline{V} \in \mathbb{R}^n$ and $\overline{V}\in \mathbb{R}^n$ are the voltage magnitude square lower and upper limits, $\underline{l} \in \mathbb{R}^n$ and $\overline{l} \in \mathbb{R}^n$ are the current magnitude square lower and upper limits.
The optimization problem \eqref{eq:P1} determines the maximum power that can be supplied by the dispatchable resources in the distribution network. The optimization problem to find the minimum power that can be supplied can similarly be determined based on \textit{(P2)}.
\begin{subequations}\label{eq:P2}
\begin{align}\label{eq:P2_a}
\text{(P2)} \quad & \max_{V^-, p^-, q^-, l^-} \ \sum_{i=1}^{n}\log(- p_i^-)\\
\text{subject to}: \quad &  \eqref{eq:op_reg_rel_neg}, \eqref{eq:curr_rel} \label{eq:P2_b}\\
  & \underline{S_i}\le  f(p_i^-,q_i^-) \le \overline{S_i} \quad \forall i\in \mathcal{N}\label{eq:P2_c}\\
    & \underline{V}\leq  V^- \leq \overline{V}\label{eq:P2_d}\\
    & \underline{l}\leq  l^- \leq \overline{l}\label{eq:P2_e}
\end{align}
\end{subequations}
It is assumed that $p_i^- < 0, \forall i\in \mathcal{N}$.
Optimization problems \textit{(P1)} and \textit{(P2)} are non-convex due to the constraint \eqref{eq:curr_rel} which is a nonlinear relationship. In the next section we will provide a formulation for these problems that is a linear inner approximation.

\section{Convex inner approximation formulation}\label{sec:convex_inner}
In order to obtain a convex inner approximation of \textit{(P1)} and \textit{(P2)}, we need to approximate the nonlinear relationship in \eqref{eq:curr_rel}. This is obtained by considering the worst case of $l$ in order to obtain a conservative estimate of $\Delta p$ (i.e., $l_{\text{min}}$ in \textit{(P1)} and $l_{\text{max}}$ in \textit{(P2)}). Based on this approximation, a conservative estimate of $\Delta p$, $\Delta p_{\text{c}}=p^+_{\text{c}}-p^-_{\text{c}}$ can be obtained, where $p^+_{\text{c}}, p^-_{\text{c}}$ are the inner approximations of $p^+,p^-$.
\begin{align}
     M_{\text{p}}p^+\ge &V^+-v_{\text{0}}\mathbf{1}-M_{\text{q}}q^+ +Hl_{\text{min}}\label{eq:op_cons_rel_pos}=M_{\text{p}}p^+_c\\
     M_{\text{p}}p^-\le&V^--v_{\text{0}}\mathbf{1}-M_{\text{q}}q^- +Hl_{\text{max}}=M_{\text{p}}p^-_c\label{eq:op_cons_rel_neg}
\end{align}
Based on the above inner approximations, \textit{(P1)} and \textit{(P2)} can now be modified to a convex inner approximation model shown below, if $l_{\text{min}},l_{\text{max}}$ can be found \textit{a-priori} based on the network capacity.
\begin{subequations}\label{eq:P3}
\begin{equation}
\text{(P3)} \max_{V^+, p^+_{\text{c}}, q^+} \ \sum_{i=1}^{n}\log(p_{i,\text{c}}^+)
\end{equation}
\begin{equation}
\text{subject to}:\quad  \eqref{eq:op_cons_rel_pos}, \eqref{eq:P1_c}-\eqref{eq:P1_d}
\end{equation}
\end{subequations}

\begin{subequations}\label{eq:P4}
\begin{equation}
\text{(P4)} \max_{V^-, p^-_{\text{c}}, q^-} \ \sum_{i=1}^{n}\log(-p_{i,\text{c}}^-)
\end{equation}
\begin{equation}
\text{subject to}: \quad \eqref{eq:op_cons_rel_neg}, \eqref{eq:P2_c}-\eqref{eq:P2_d}
\end{equation}
\end{subequations}
The optimization problems \textit{(P3)} and \textit{(P4)} are convex and determine an inner approximation of the feasible operating region of the nodal injections that satisfy the network constraints. The problem then is to determine the the worst cases of $l$, i.e., $l_{\text{min}}$ and $l_{\text{max}}$, which is discussed in the next section.

\subsection{Determining the worst-case bounds for $l$}
In this section we present a method to calculate the worst case of $l$, i.e., $l_{\text{min}}$ and $l_{\text{max}}$, which results in a feasible convex inner approximation. $l_{\text{min}}$ and $l_{\text{max}}$ are calculated based on the real and reactive power capacity of the nodal injections, which in turn is determined based on the demand and solar profile and DER capacities at a particular node. Before solving the optimization problems \textit{P3} and \textit{P4}, we determine $l_{\text{min}}$ and $l_{\text{max}}$ by solving a power flow based on the real and reactive power capacity of the nodal injections as depicted by the block diagram in Fig.~\ref{fig:block_model} that outlines the steps to solve the optimization problem. For simplicity, in this paper we set $l_{\text{min}}$ to zero, which reduces \textit{P3} to the \textit{LinDist} model. $l_{\text{max}}$ is obtained by solving a power flow where the dispatchable resources are set to their capacity to determine the worst case of $l$. Based on this $l_{\text{max}}$, \textit{(P4)} is solved to obtain a value of $p^-_c$ that respects network constraints, which could be violated if \textit{LinDist} model was used instead. Based on these results, a $\Delta p_c$ which represents a feasible operating region of the network can be obtained. Simulation results are presented in the next section to show the validity of the proposed approach.
\begin{figure}[ht]
\centering
\includegraphics[width=0.4\textwidth]{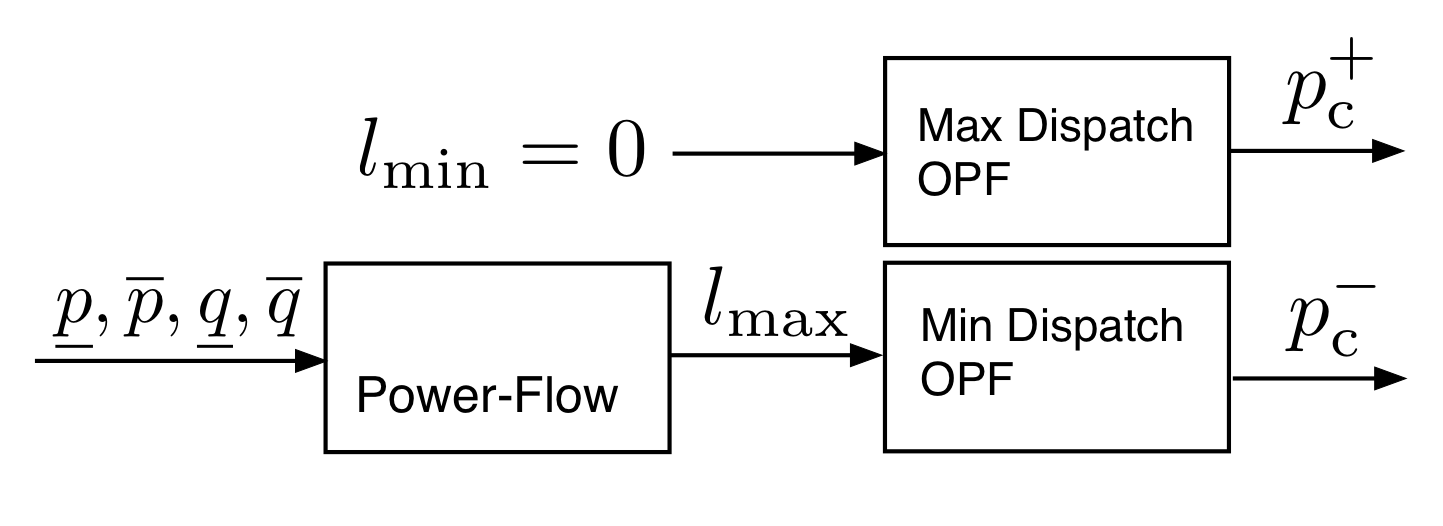}
\caption{\label{fig:block_model} Block diagram of the optimization problem to obtain the maximum dispatchable operating region.}
\end{figure}

\section{Simulation Results}\label{sec:sim_results}
In this section, simulation results on a standard IEEE-test network will be presented to show the the efficacy of the proposed approach. A comparison between the results of the convex inner approximation and the linearized OPF (\textit{LinDist}) is presented. However, the techniques presented here can be extended to improve any linearized OPF model. It is shown that the convex inner approximation results in a feasible solution when the \textit{LinDist} model may result in an infeasible solution. Simulation tests are conducted on the IEEE-13 node test case~\cite{kersting2001radial} using optimization solver SDPT3~\cite{tutuncu2003solving} in CVX, with validation of the results performed through Matpower~\cite{zimmerman2011matpower}. The block diagram in Fig.~\ref{fig:block_model} shows the steps to determine the feasible operating region from the proposed inner convex approximation.

For the purpose of comparison between the convex inner approximation and the linearized OPF model, three test scenarios are considered as shown in Table~\ref{table_scenarios}. In each of the three test scenarios, the operating regions obtained from the \textit{LinDist} OPF model and the convex inner approximation are obtained and then the feasibility of the operating regions is compared through powerflow solutions.

\begin{table}[h!]
\centering
\caption{\label{table_scenarios}Types of apparent power constraints}

{
\begin{tabular}{rll} 
\toprule
{Case no.} & {Case description} & {Bounds $(f(p_i,q_i))$} \\
\midrule
            1 & Unity power factor & $q_i=\gamma_i p_i$\\
            2 & Box constraint & $\underline{q_i}\le q_i\le \overline{q_i}$\\
            3 & Quadratic constraint & $p_i^2+q_i^2\le \overline{S_i}$
\\ \bottomrule
\end{tabular}
}
\end{table}
\begin{figure}[ht]
\centering
\includegraphics[width=0.43\textwidth]{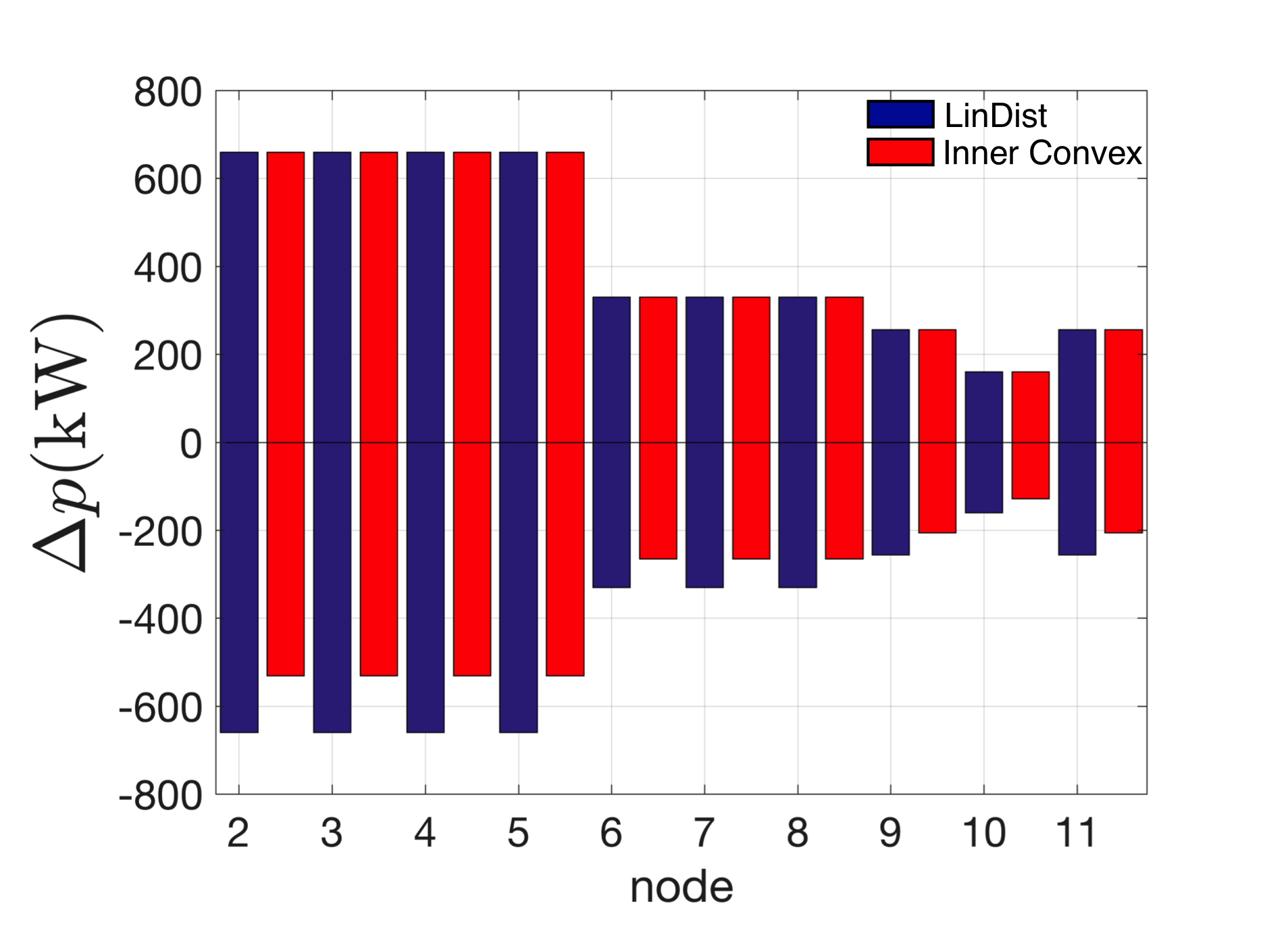}
\caption{\label{fig:delP_comp_pf} Comparison of the feasible operating regions for Case~1 (unity power factor) between \textit{LinDist} and convex inner approximation showing the conservativeness of the inner convex approximation over \textit{LinDist}. Based on this, the feasible operating region of flexible dispatchable resources can be obtained by subtracting solar and demand forecast.}
\end{figure}

\begin{figure}[t]
    \centering
    \subfloat[\label{fig:delV_comp_pos_pf}]{
    \includegraphics[width=0.51\linewidth]{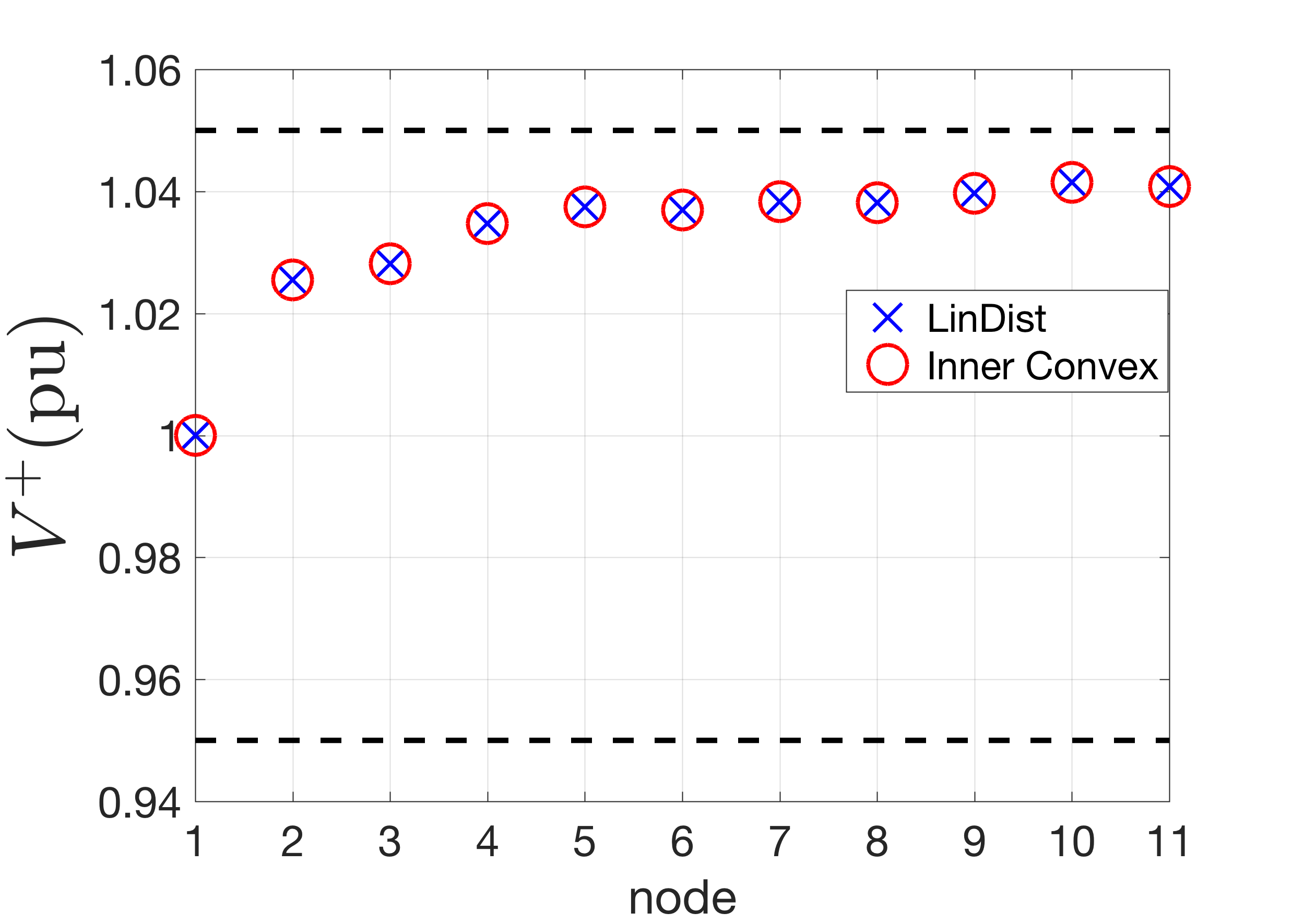}}
    \subfloat[\label{fig:delV_comp_neg_pf}]{
    \includegraphics[width=0.51\linewidth]{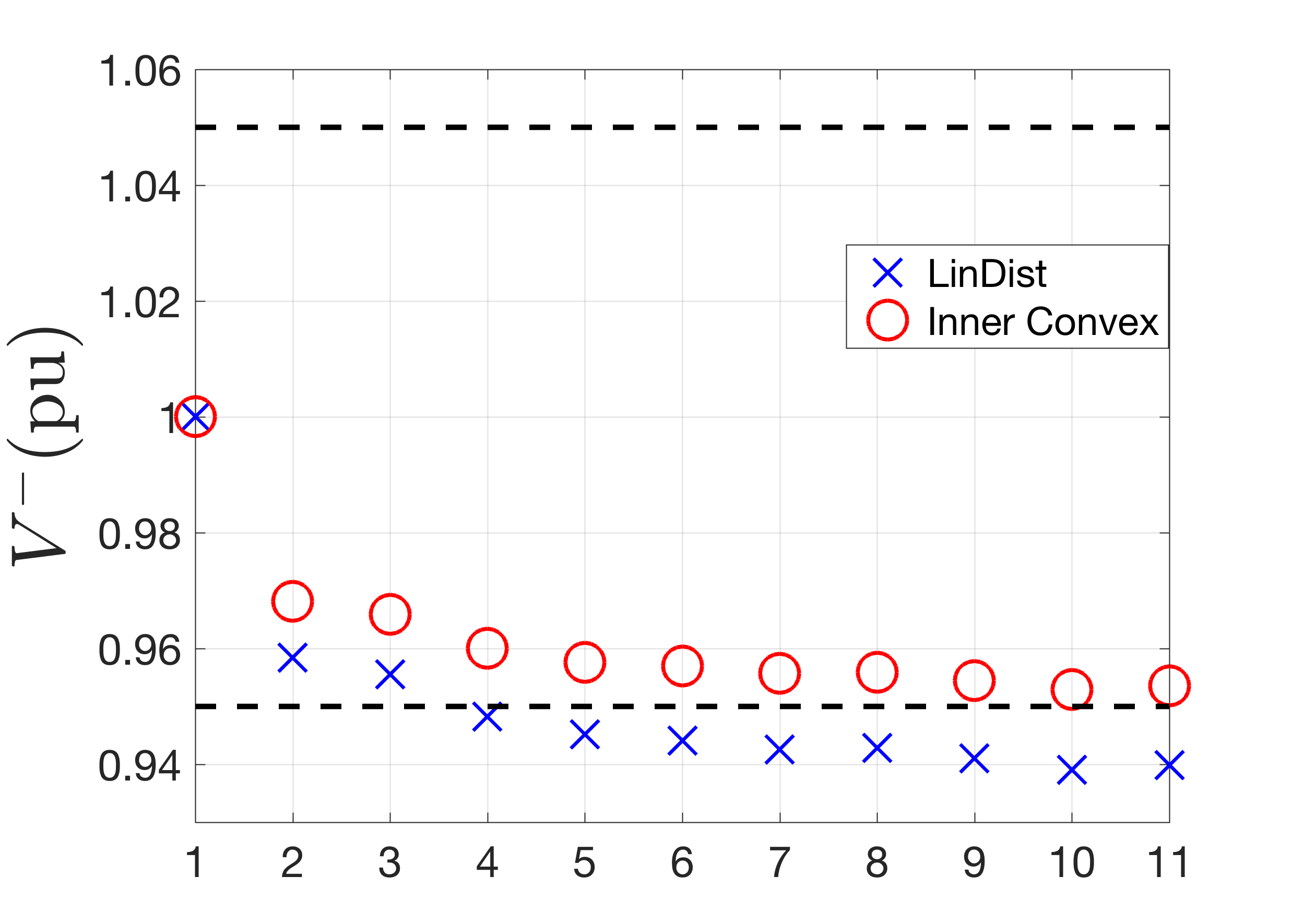}}
    \caption{(a) Comparison of the Voltages ($V^+$) for Case~1 (unity power factor) obtained through Matpower for the optimized set-points from \textit{LinDist} and convex inner approximation showing that the voltages match (as $l_{min}=0$ and are within the bounds, (b) Comparison of the Voltages ($V^-$) for Case~1 (unity power factor) obtained through Matpower for the optimized set-points from \textit{LinDist} and convex inner approximation showing that the voltages violate the bounds when using \textit{LinDist} model, which is avoided when using the convex inner approximation.}
\end{figure}

\begin{figure}[t]
    \centering
    \subfloat[\label{fig:delP_comp_box}]{
    \includegraphics[width=0.51\linewidth]{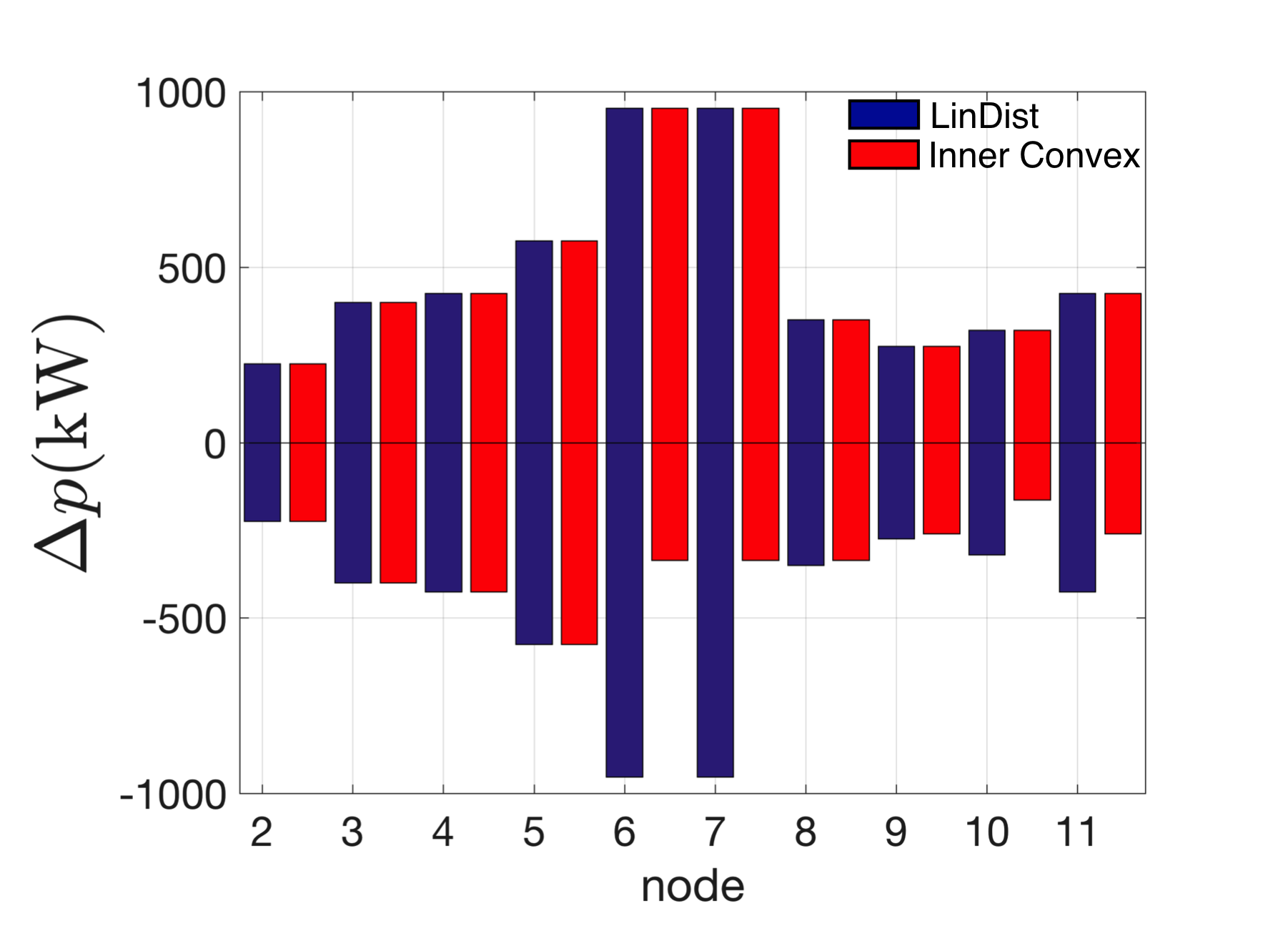}}
    \subfloat[\label{fig:delQ_comp_box}]{
    \includegraphics[width=0.51\linewidth]{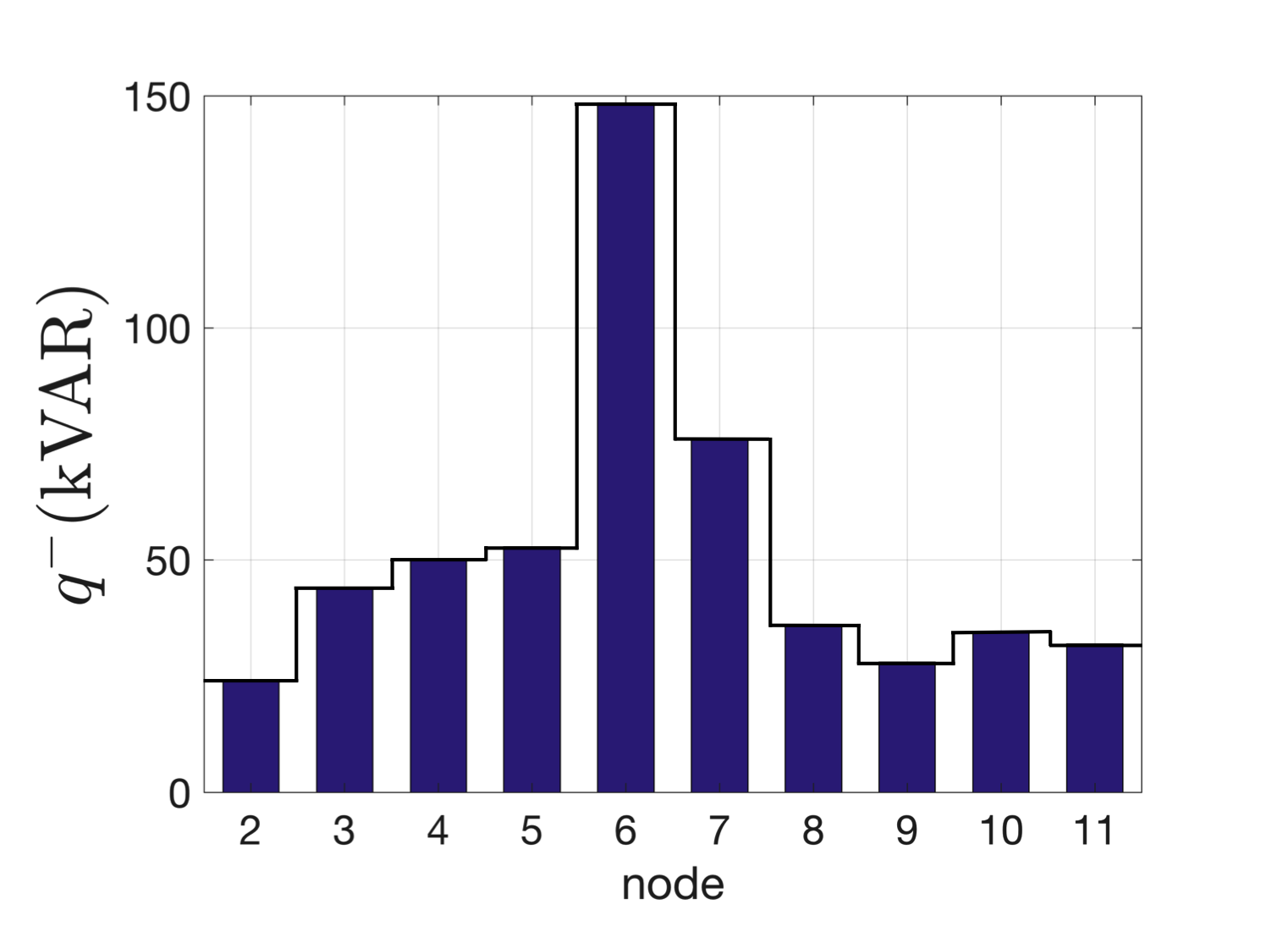}}
    \caption{(a) Comparison of the feasible operating regions for Case~2 (box constraints) between \textit{LinDist} and convex inner approximation showing the conservativeness of the inner convex approximation over \textit{LinDist}, (b) Reactive power injection (blue bars) in \textit{LinDist} for Case~2 (box constraints) showing that the reactive power is at its limit (black lines) and hence the voltage infeasibility cannot be improved through  more injection of reactive power, highlighting the need for a conservative estimate of $\Delta p$.}
\end{figure}


\begin{figure}[t]
    \centering
    \subfloat[\label{fig:delV_comp_pos_box}]{
    \includegraphics[width=0.51\linewidth]{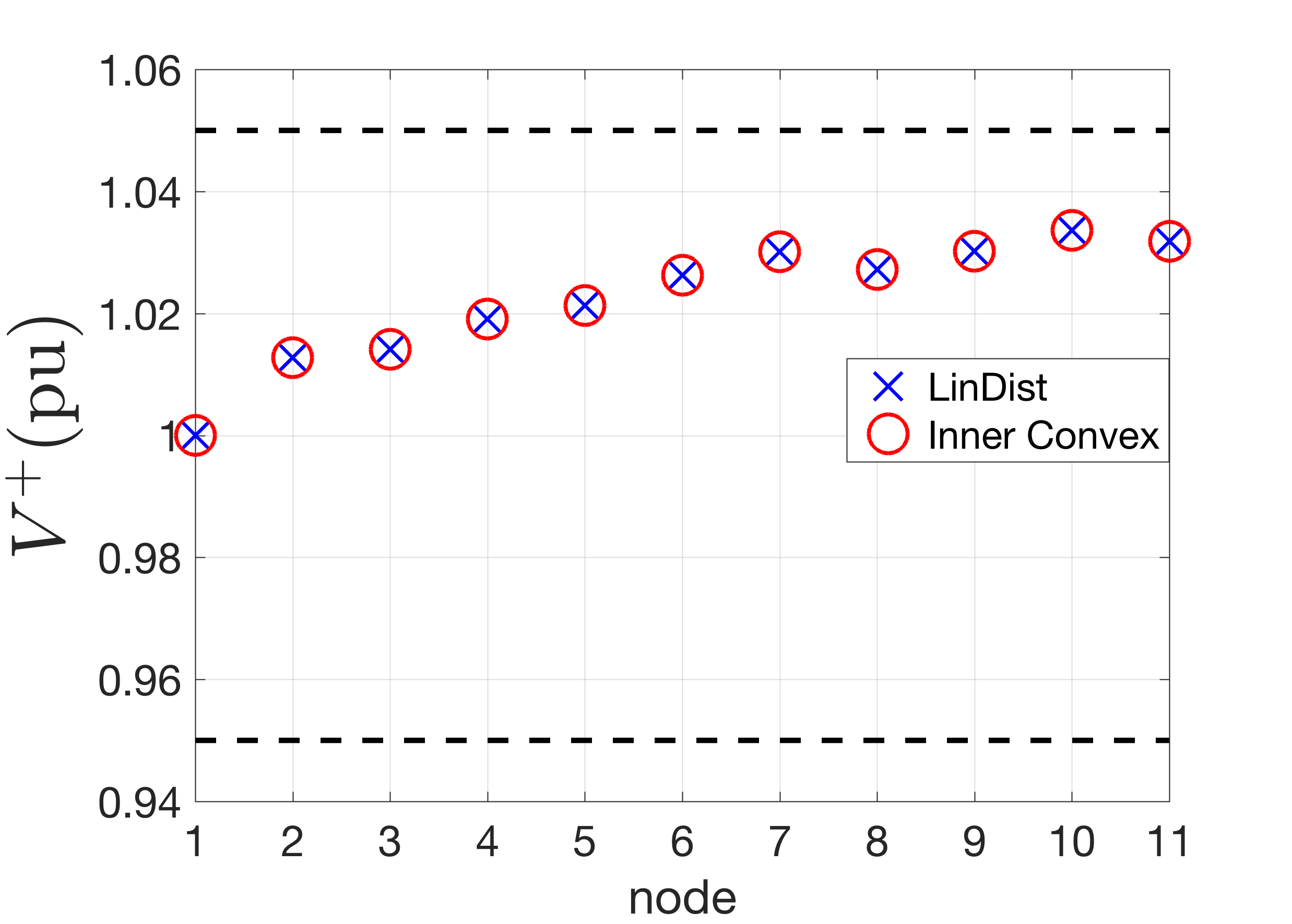}}
    \subfloat[\label{fig:delV_comp_neg_box}]{
    \includegraphics[width=0.51\linewidth]{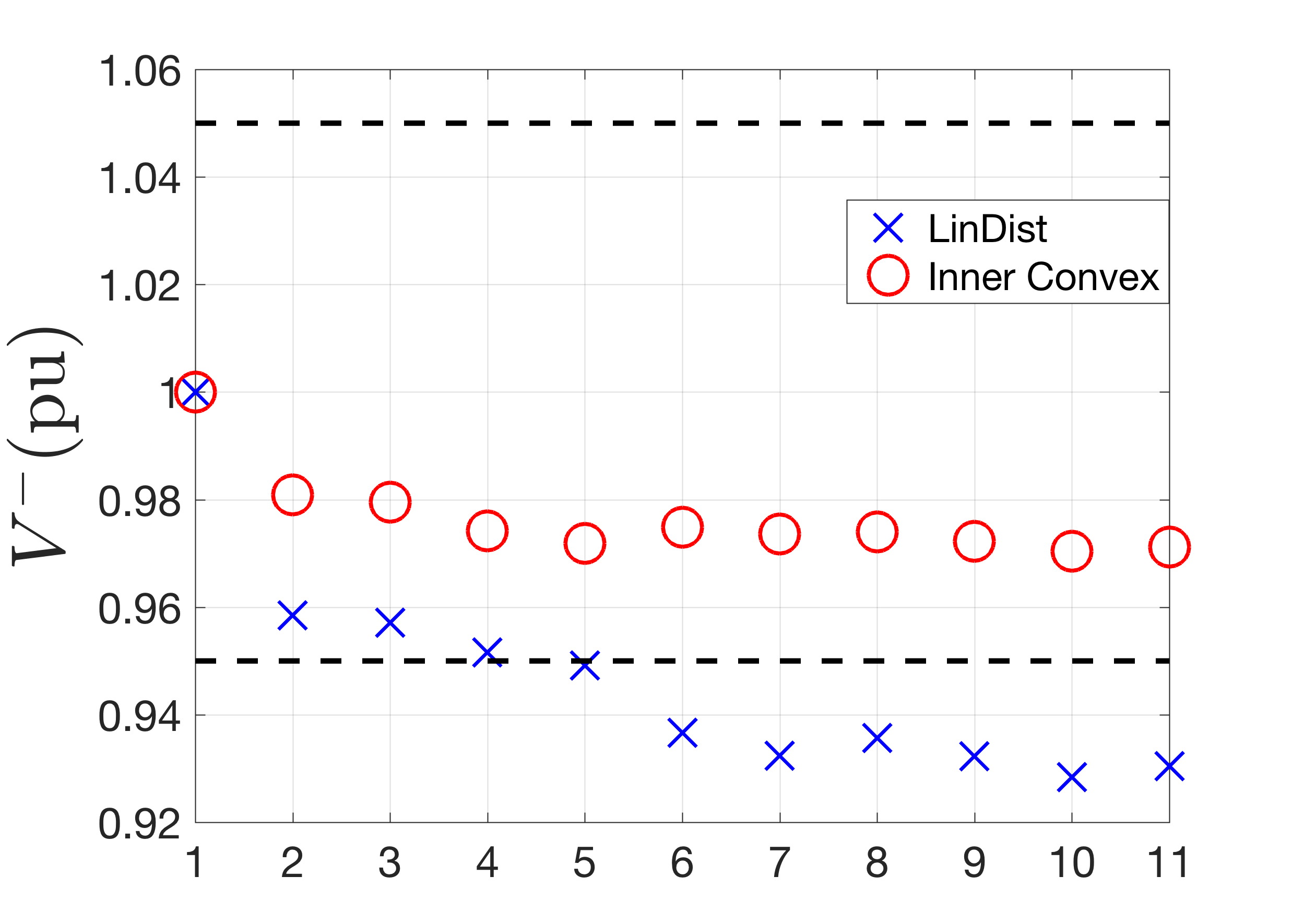}}
    \caption{(a) Comparison of the Voltages ($V^+$) for Case~2 (box constraints) obtained through Matpower for the optimized set-points from \textit{LinDist} and convex inner approximation, (b) Comparison of the Voltages ($V^-$) for Case~2 (box constraints) obtained through Matpower for the optimized set-points from \textit{LinDist} and convex inner approximation showing that the voltage bounds are violated when using the \textit{LinDist} model, which is avoided when using the convex inner approximation.}
\end{figure}


\subsection{Case 1: Constant power factor}
In this case, the relation between real and reactive power injection for the flexible resource is given by a constant power factor, $PF_i:=\cos(\phi_i)$, at node $i$:
\begin{align}\label{eq:pf_rel}
         q_i=&\gamma_i p_i \quad \forall i\in \mathcal{N}
\end{align}
where $\gamma_i:= \sqrt{(1-PF_i^2)/PF_i^2}$. For this test case, we consider the flexible resource to be an aggregation of resistive water heaters, which results in unity power factor, i.e., $cos(\phi_i)=1, \forall i\in \mathcal{N}$. 

Figure~\ref{fig:delP_comp_pf} shows the comparison of the feasible region obtained from \textit{LinDist} model and from the proposed convex inner approximation for the constant power factor case. 
It can be seen from Fig.~\ref{fig:delP_comp_pf} that the convex inner approximation provides conservative lower bounds on the feasible region. Figure~\ref{fig:delV_comp_pos_pf} and Fig.~\ref{fig:delV_comp_neg_pf} show the comparison of the voltages obtained through Matpower for the set-points shown in Fig.~\ref{fig:delP_comp_pf}. As can be clearly seen from Fig.~\ref{fig:delV_comp_neg_pf}, in case of the \textit{LinDist} model the voltages obtained when determining the lower limit of the operating region violate network bounds, whereas the convex inner approximation due to its conservative approach is able to obtain a feasible solution.
\subsection{Case 2: Box constraints on reactive power}
In this case, simulation results are conducted with  box constraints on reactive power as shown below:
\begin{align}
    \underline{q_i}\le q_i \le \overline{q_i} \quad \forall i\in \mathcal{N}
\end{align}
where $\underline{q}\in \mathbb{R}$ is the lower limit of the reactive power injection and $\overline{q} \in \mathbb{R}$ is the upper limit, at node $i$. From Fig.~\ref{fig:delP_comp_box} it can be seen that the convex inner approximation provides a conservative estimate to the feasible region. Figure~\ref{fig:delV_comp_pos_box} shows the comparison of the voltages at the upper limit of the feasible region, whereas Fig.~\ref{fig:delV_comp_neg_box} shows the comparison of the voltages at the lower limit of the feasible region. As can be seen the figures, the convex inner approximation is able to provide a feasible solution, while the \textit{LinDist} model results in violation of constraints. Figure~\ref{fig:delQ_comp_box} shows that the violation of voltage constraints in \textit{LinDist} model is due to the operating region of real power injection $\Delta p$, as the reactive power injection is at its limit and cannot provide any further voltage support. This is also proved in Theorem~\ref{Theorem1} in Appendix~\ref{appen1} that shows for the \textit{LinDist} model the reactive power at optimality is always at the boundary of the constraint.


\begin{figure}
    \centering
    \subfloat[\label{fig:delP_comp_inv}]{
    \includegraphics[width=0.51\linewidth]{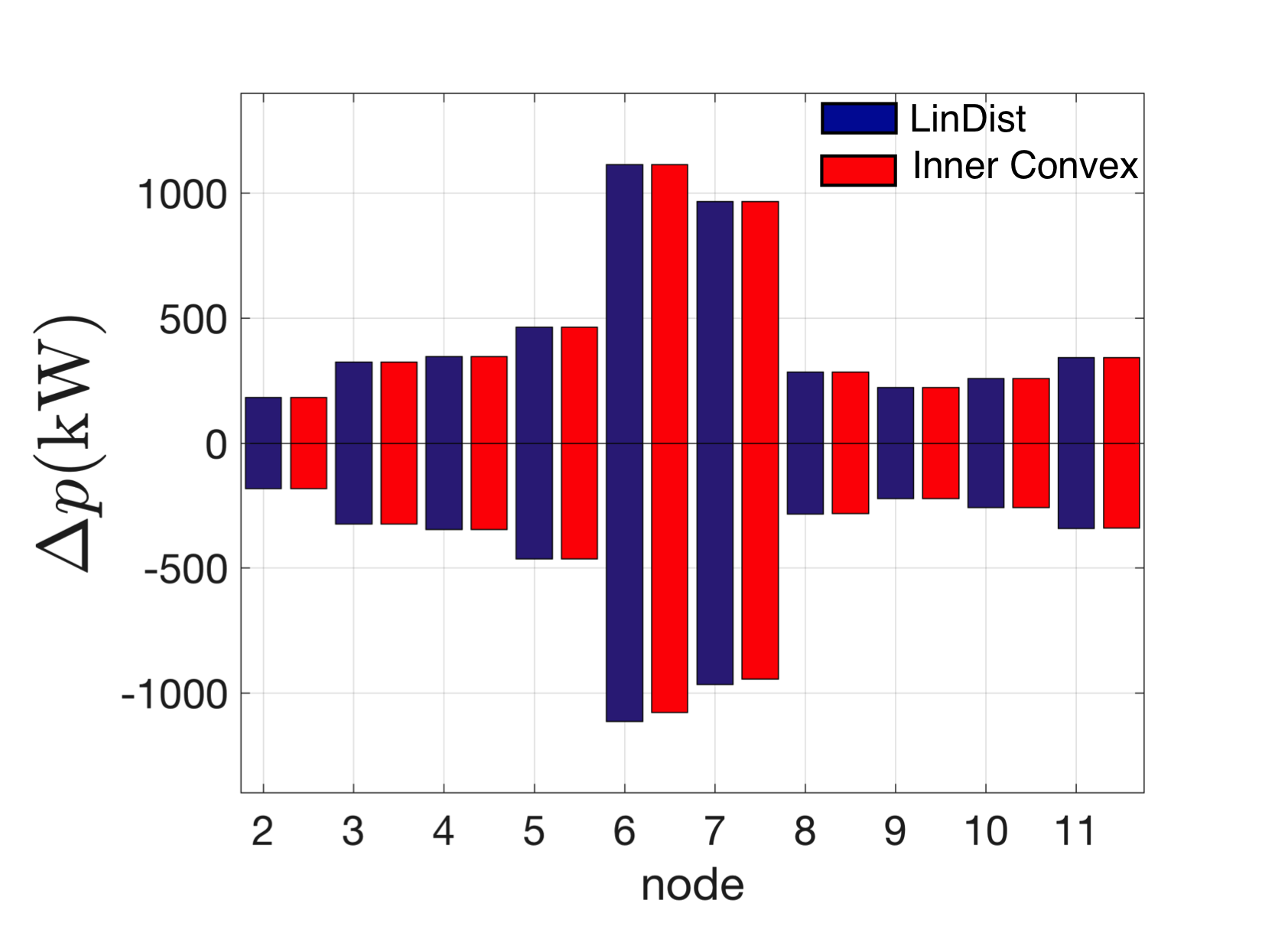}}
    \subfloat[\label{fig:Smax_comp_inv}]{
    \includegraphics[width=0.51\linewidth]{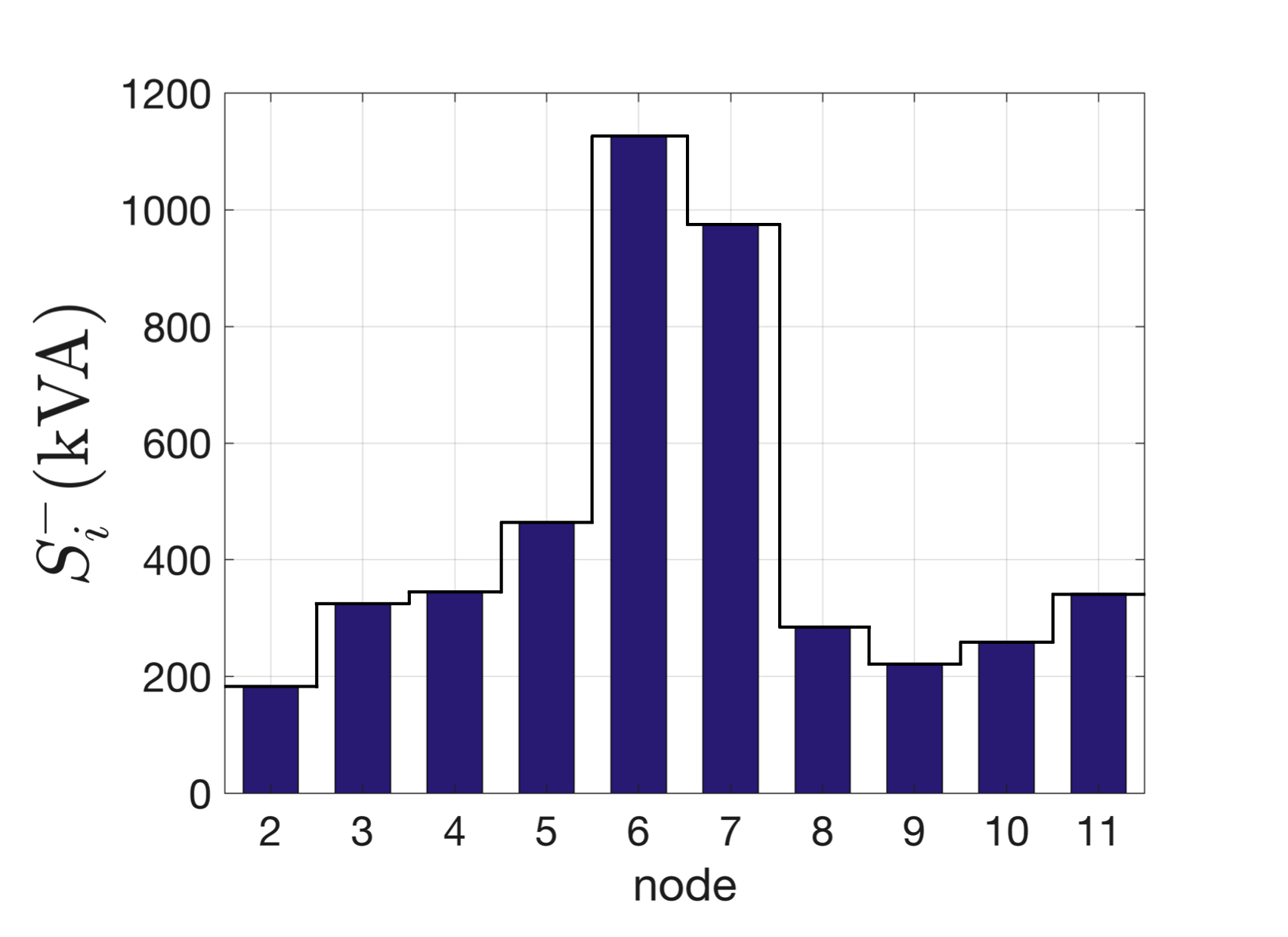}}
    \caption{(a) Comparison of the feasible operating regions for Case~3 (quadratic apparent power constraint) between \textit{LinDist} and convex inner approximation showing the conservativeness of the inner convex approximation over \textit{LinDist}, (b) Apparent power injection in \textit{LinDist} (blue bars) for Case~3 (quadratic apparent power constraint) shows that the apparent power is at its limit (black line) and hence a voltage infeasibility cannot be improved through additional injection of reactive power, highlighting the need for a conservative estimate of $\Delta p$.}
\end{figure}

\begin{figure}
    \centering
    \subfloat[\label{fig:delV_comp_pos_inv}]{
    \includegraphics[width=0.51\linewidth]{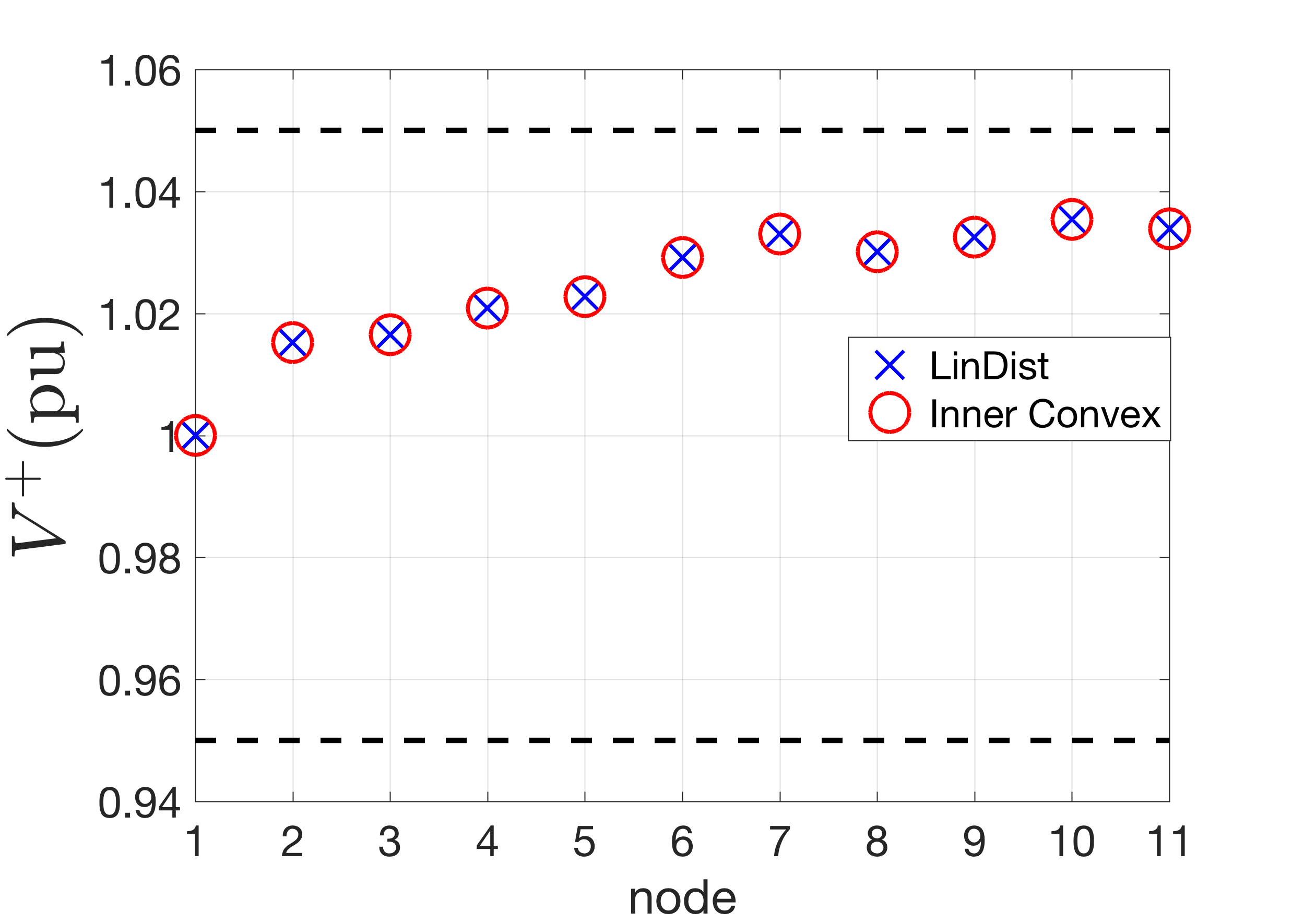}}
    \subfloat[\label{fig:delV_comp_neg_inv}]{
    \includegraphics[width=0.51\linewidth]{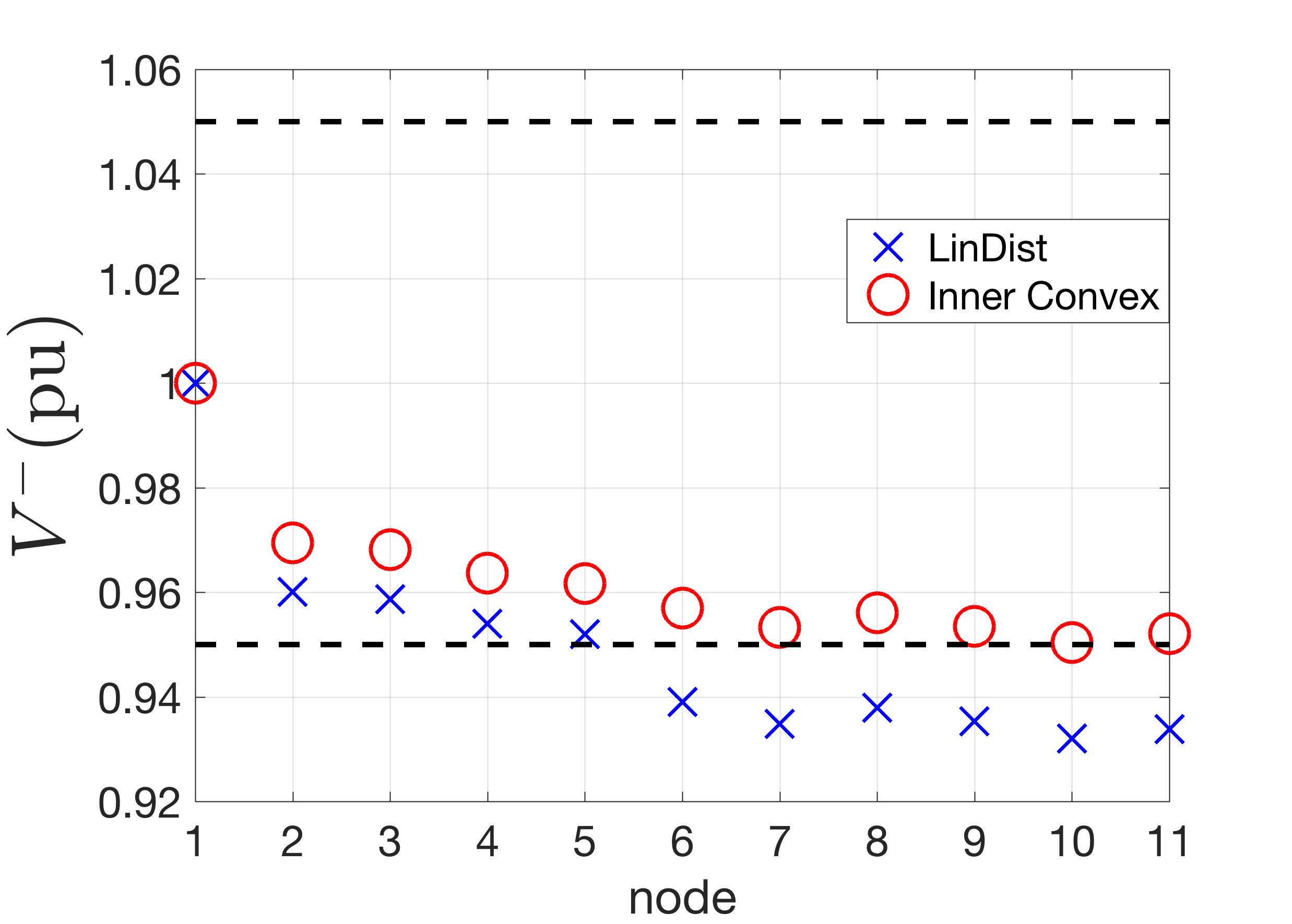}}
    \caption{(a) Comparison of the Voltages ($V^+$) for Case~3 (quadratic apparent power constraint) obtained through Matpower for the optimized set-points from \textit{LinDist} and convex inner approximation, (b) Comparison of the Voltages ($V^-$) for Case~3 (quadratic apparent power constraint) obtained through Matpower for the optimized set-points from \textit{LinDist} and convex inner approximation showing that the voltage bounds are violated when using the \textit{LinDist} model, which is avoided when using the convex inner approximation.}
\end{figure}


\subsection{Case 3: Apparent power constraint}
In this case the real and reactive power injections at each node are bound by the following quadratic apparent power constraint:
\begin{align}
    p_i^2+q_i^2\le \overline{S_i}\quad \forall i\in \mathcal{N}
\end{align}
This is the case when flexible resources are connected to the grid through inverters. From Fig.~\ref{fig:delP_comp_inv} it can be seen that the convex inner approximation provides a conservative estimate of the feasible region. The comparison of the voltages in Fig.~\ref{fig:delV_comp_pos_inv} and Fig.~\ref{fig:delV_comp_neg_inv} shows that at the lower limit of the feasible region, the \textit{LinDist} model does not result in a feasible solution, which on the other hand is provided by the convex inner approximation. Furthermore, Fig.~\ref{fig:Smax_comp_inv} shows that the nodal injections are at their apparent power limits without any margin to provide reactive power support. Theorem~\ref{Theorem1} in Appendix~\ref{appen1} presents a formal proof that shows that this condition always holds.

These simulation results illustrate how the convex inner approximation compares against the linearized OPF models with respect to modeling errors, and hence ensuring robust and resilient operation of distribution feeders. To verify the feasibility guarantee of the convex inner approximation, Monte Carlo methods are employed to sample 10,000 different combinations of the DER operating regions and simulate the resulting AC load flow with Matpower to determine nodal voltages with distribution as shown in Fig.~\ref{fig:volt_spread}. From the figure, it is clear that voltages are within limits for all combinations. 

\begin{figure}[ht]
\centering
\includegraphics[width=0.4\textwidth]{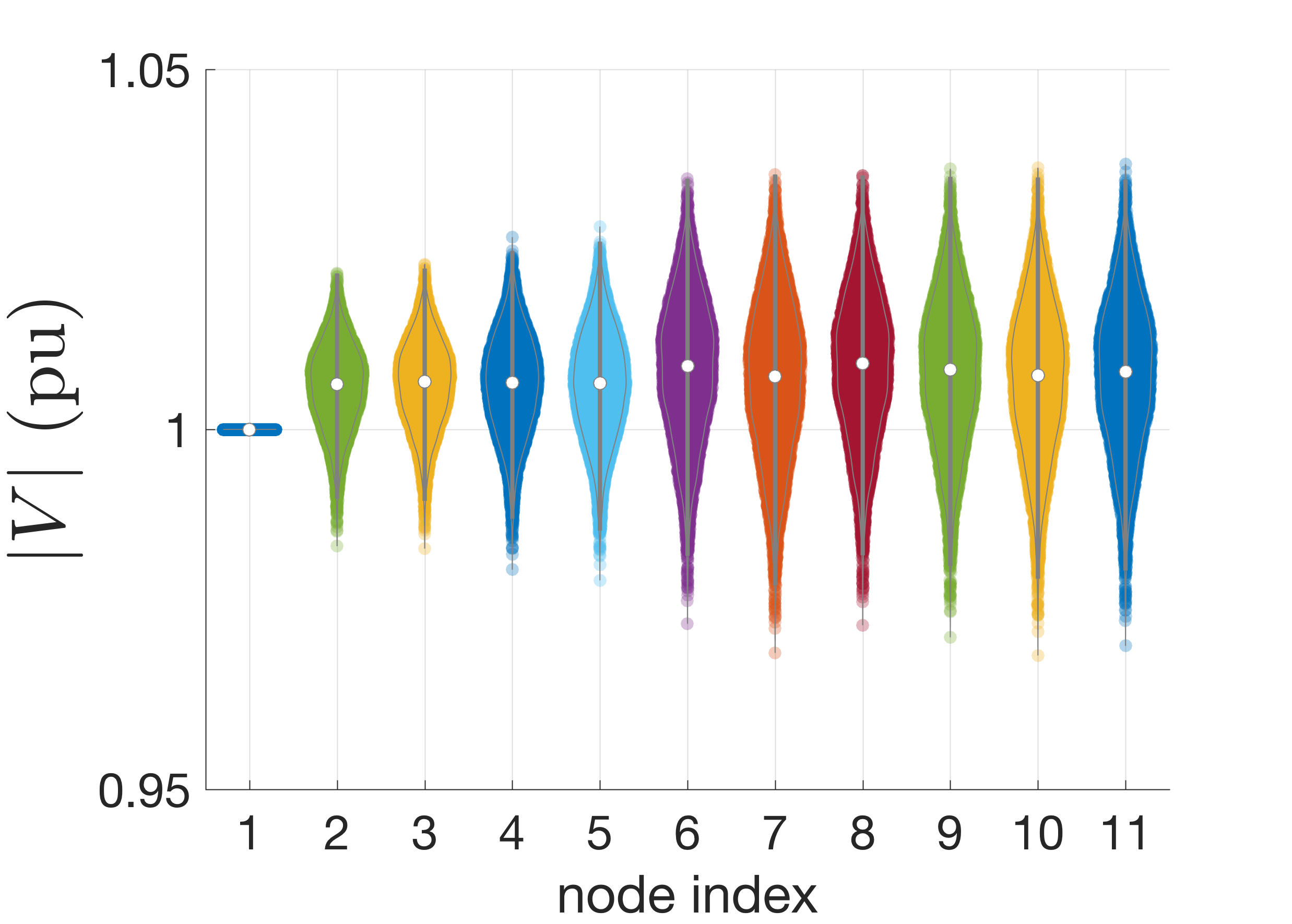}
\caption{\label{fig:volt_spread} Violin plot of the voltage magnitudes from 10,000 Monte-Carlo simulations using the operation regions obtained from the convex inner approximation.}
\end{figure}

\section{Conclusions and future work}\label{sec:conclusion}
This paper presents a convex inner approximation of the optimal power flow problem that determines the feasible operating region of the dispatchable resources while respecting the network constraints. Through illustrative examples, the shortcomings of linear OPF techniques is explained. The mathematical formulation of the convex inner approximation is developed and through simulations on an IEEE test network, the advantage of using this formulation is presented. 

Future work will extend this technique to other OPF formulations and to multi-period problems in the OPF domain to consider the challenging case of a reference trajectory that needs to be disaggregated over time across energy-constrained nodes on a network. Extending this work further to include stochastic distributed generation and demand effects while providing feasibility guarantees under both model mismatch and forecast uncertainty is an important area of work to be undertaken. Finally, while we focus on a feasible disaggregation at the nodal level (to ensure constraints are not violated), we should also need to consider an optimal disaggregation policy that allows an aggregator to steer the system response along an (online) OPF solution.

\appendices
\section{LinDist model at optimality}\label{appen1}
Based on the mathematical modeling presented in section~\ref{sec:math_model}, the optimization problem to determine the operating region of dispatchable resources using \textit{LinDist} model can be expressed as:
\begin{subequations}\label{eq:P5}
\begin{align}\label{eq:P5_a}
\text{(P5)} \quad &  \max_{V, p, q} \ \sum_{i=1}^{n}\log( p_i)\\
\text{subject to}: \quad & M_{\text{p}}p = V-v_0\mathbf{1}-M_{\text{q}}q \label{eq:P5_b}\\
&   \underline{S}_i \le f(p_i,q_i) \le \overline{S}_i \quad \forall i\in \mathcal{N}\label{eq:P5_c}\\
& \underline{V}\leq V\leq \overline{V}\label{eq:P5_d}
\end{align}
\end{subequations}

\begin{theorem}\label{Theorem1}
With the \textit{LinDist} model applied to a radial, inductive distribution feeder, the  constraints related to reactive power injections from the dispatchable demand-side resources are all active (i.e., $f(p_i,q_i) = \underline{S}_i$ or $f(p_i,q_i) = \overline{S}_i$ for all $i$).
\end{theorem}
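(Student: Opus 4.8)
The plan is to pass to the Karush--Kuhn--Tucker (KKT) conditions of the convex program \eqref{eq:P5} and to show that the multiplier $\lambda$ attached to the network equality \eqref{eq:P5_b} cannot be ``flat'' in the reactive direction at any node; complementary slackness then forces \eqref{eq:P5_c} to be active there. Assuming \eqref{eq:P5} is feasible (so that a maximizer exists and, under a mild constraint qualification, satisfies KKT), introduce $\lambda\in\mathbb{R}^n$ for \eqref{eq:P5_b}, $\overline{\nu},\underline{\nu}\in\mathbb{R}^n_{\ge 0}$ for the upper/lower sides of \eqref{eq:P5_d}, and $\overline{\mu}_i,\underline{\mu}_i\ge 0$ for the two sides of \eqref{eq:P5_c}. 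Since $M_{\text{p}},M_{\text{q}}$ are symmetric, stationarity reads $\lambda=\overline{\nu}-\underline{\nu}$ (in $V$), $1/p_i=(M_{\text{p}}\lambda)_i+(\overline{\mu}_i-\underline{\mu}_i)\,\partial_{p_i}f_i$ (in $p_i$), and $(M_{\text{q}}\lambda)_i=-(\overline{\mu}_i-\underline{\mu}_i)\,\partial_{q_i}f_i$ (in $q_i$).

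First I would read off what these give in each constraint type of Table~\ref{table_scenarios}. Case~1 ($q_i=\gamma_ip_i$) is immediate, being an equality constraint and hence always active. In Case~2 ($f_i=q_i$, so $\partial_{p_i}f_i=0$) the $p_i$-equation collapses to $(M_{\text{p}}\lambda)_i=1/p_i>0$, so $M_{\text{p}}\lambda>0$ componentwise and $\lambda\neq 0$, while the $q_i$-equation is $(M_{\text{q}}\lambda)_i=\underline{\mu}_i-\overline{\mu}_i$. In Case~3 ($f_i=p_i^2+q_i^2$, only an upper bound, so $\underline{\mu}_i=0$) the $q_i$-equation is $(M_{\text{q}}\lambda)_i=-2\overline{\mu}_iq_i$. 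In both cases, if $(M_{\text{q}}\lambda)_i\neq 0$ then $\underline{\mu}_i$ or $\overline{\mu}_i$ is strictly positive, so by complementary slackness \eqref{eq:P5_c} is active at node $i$ --- and if in fact $(M_{\text{q}}\lambda)_i>0$ the conclusion becomes concrete: the resource absorbs its maximum reactive power ($f_i=\underline{S}_i$ in Case~2) or sits on its capability circle ($p_i^2+q_i^2=\overline{S}_i$ with $q_i<0$ in Case~3), exactly as observed in Figs.~\ref{fig:delQ_comp_box} and \ref{fig:Smax_comp_inv}. So it suffices to prove $(M_{\text{q}}\lambda)_i>0$ for every $i$.

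For the strict positivity I would use the radial, inductive structure. For a radial feeder $C=(I_n-A)^{-1}$ is the subtree (path) matrix with entries in $\{0,1\}$, so $M_{\text{q}}=2C^{T}XC$ and $M_{\text{p}}=2C^{T}RC$ are entrywise nonnegative; writing $\mathcal{P}_i$ for the set of branches on the path from node $0$ to node $i$, $(M_{\text{q}})_{ij}=2\sum_{(a,b)\in\mathcal{P}_i\cap\mathcal{P}_j}x_{ab}$, which on an inductive feeder ($x_{ab}>0$) is strictly positive for every pair of nodes sharing a head branch, so $M_{\text{q}}$ has a strictly positive diagonal. Hence, once $\lambda\ge 0$ and $\lambda\neq 0$ are known, $(M_{\text{q}}\lambda)_i>0$ for all $i$; here $M_{\text{p}}\lambda>0$ (from Case~2) conveniently excludes the degenerate possibility that $\lambda$ vanishes on an entire subtree hanging off the substation. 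Feeding this back into the $q_i$-equations closes the argument in all three cases.

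The remaining --- and I expect hardest --- step is to establish $\lambda\ge 0$, equivalently that at the optimum only the \emph{upper} voltage bounds are active ($\underline{\nu}=0$). The intuition is that maximizing $\sum_i\log p_i$ drives all injections up, and since $M_{\text{p}}$ is entrywise nonnegative, increasing any $p_i$ weakly raises every nodal voltage, so a maximizer rests against the upper voltage envelope rather than the lower one. Turning this into a proof requires a perturbation argument: if $V_{j_0}=\underline{V}_{j_0}$ at the optimum, exhibit a feasible objective--increasing move --- a small coordinated increase of the injections, possibly with a local reactive re-dispatch keeping $V_{j_0}$ off its floor --- which is delicate, and it can fail if a lower voltage limit is set pathologically close to $v_0$; the statement is intended for the ordinary operating regime in which the lower voltage limits are slack at the power-maximizing dispatch. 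With $\lambda\ge 0$ in hand, the chain above yields the theorem.
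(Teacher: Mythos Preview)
Your overall strategy---write the KKT system for \eqref{eq:P5}, look at stationarity in $q_i$, and force the multiplier on \eqref{eq:P5_c} to be nonzero via complementary slackness---is exactly the paper's approach. The difference is in how the contradiction is closed, and this is precisely where your proposal has its acknowledged gap.

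You carry the equality multiplier $\lambda$ and reduce the problem to showing $(M_{\text{q}}\lambda)_i\neq 0$, which then leads you to try to prove $\lambda\ge 0$ (only upper voltage bounds bind). You admit this step is delicate and do not complete it; as written, the proposal is therefore incomplete. The paper does \emph{not} go through a sign argument on $\lambda$ at all. Instead it substitutes \eqref{eq:P5_b} into \eqref{eq:P5_d}, keeps only the voltage multipliers $\underline{\lambda},\overline{\lambda}\ge 0$, and writes the $q_i$-stationarity in a node-local form $(\overline{\lambda}^i-\underline{\lambda}^i)[M_{\text{q}}]_i=0$ with $[M_{\text{q}}]_i>0$. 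From this it reads off $\overline{\lambda}^i=\underline{\lambda}^i$; since both are nonnegative and, by complementary slackness on \eqref{eq:P5_d}, at most one of the two voltage bounds at node $i$ can be tight (so at most one multiplier can be positive), both must vanish. Plugging $\overline{\lambda}^i=\underline{\lambda}^i=0$ into the $p_i$-stationarity then yields $1/p_i=0$, the desired contradiction. No global sign information on $\overline{\lambda}-\underline{\lambda}$ is ever invoked.

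So the fix for your write-up is to drop the $\lambda\ge 0$ detour and instead use nonnegativity of the \emph{individual} voltage multipliers together with the fact that $\underline{V}<\overline{V}$ prevents both from being positive at the same node. One caveat worth noting: your fully coupled stationarity $(M_{\text{q}}\lambda)_i=\sum_j(M_{\text{q}})_{ij}\lambda_j$ is the correct expression, whereas the paper's node-local form effectively suppresses the off-diagonal coupling in $M_{\text{q}}$; this is what lets the paper reach $\overline{\lambda}^i=\underline{\lambda}^i$ in one line, and it is also why you got stuck when you (correctly) kept the coupling. In other words, your more careful bookkeeping exposes a step that the paper's derivation glosses over.
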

\begin{proof}(by contradiction): Assume that at optimality the reactive power is not at its constraint (i.e., $\exists \, i, \text{s.t} \ \underline{S_i} <  f(p_i,q_i) < \overline{S_i} $). Then, the Lagrange multiplier associated with~\eqref{eq:P5_c} is zero and, hence, this constraint will not show up in the KKT conditions.

The voltage constraint in~\eqref{eq:P5_d} can be expressed as:
\begin{align}\label{eq:volt_const_th}
    \underline{V}\le M_{\text{p}}p+M_{\text{q}}q+v_{\text{0}}\mathbf{1}\le \overline{V}
\end{align}
Let $\underline{\lambda} \in \mathbb{R}^n$ and $\overline{\lambda} \in \mathbb{R}^n$ be the Lagrange multipliers associated with \eqref{eq:P5_d} and let $\mathcal{L}$ be the Lagrangian. From the KKT conditions, the following relations are obtained:
\begin{align}
    \frac{\partial \mathcal{L}}{\partial p_i}=&\frac{1}{p_i}-\underline{\lambda}^i[M_{\text{p}}]_i+\overline{\lambda}^i[M_{\text{p}}]_i=0\label{eq:KKT1}\\
     \frac{\partial \mathcal{L}}{\partial q_i}=&-\underline{\lambda}^i[M_{\text{q}}]_i+\overline{\lambda}^i[M_{\text{q}}]_i=0\label{eq:KKT2}
\end{align}
where $[M_{\text{p}}]_i$ and $[M_{\text{q}}]_i$ are the sums of the $i$th columns of $M_{\text{p}}$ and $M_{\text{q}}$, respectively. From \eqref{eq:KKT2}, since $[M_{\text{q}}]_i>0$ for inductive networks, we have that $\alpha_i\overline{\lambda}^i-\beta_i\underline{\lambda}^i=0$ for some $\alpha_i, \beta_i>0$, which implies that $\overline{\lambda}^i=\underline{\lambda}^i=0$. Substituting this result in \eqref{eq:KKT1}, gives $\frac{1}{p_i}=0$, which is not possible for $p_i$ finite. Thus, we reach a contradiction. Hence, constraint~\eqref{eq:P5_c} must be active. In other words, the reactive power at optimality is always at the boundary of its constraint.
\end{proof}
\bibliographystyle{IEEEtran}
\small\bibliography{fix.bib}
\end{document}